\newtheorem{theo}{Theorem}[section]
\newtheorem{lemma}[theo]{Lemma}
\newtheorem{cor}[theo]{Corollary}
\newtheorem{prop}[theo]{Proposition}
\newtheorem{ann}{Assumption}
\numberwithin{equation}{section}
\def\Z{\mathbb{Z}}
\def\N{\mathbb{N}}
\def\P{\mathbb{P}}
\def\E{\mathbb{E}}
\def\R{\mathbb{R}}
\def\t{\textrm}
\def\d{\textrm{d}}
\def\w{\widetilde}
\def\ind{{\mathchoice {\rm 1\mskip-4mu l} {\rm 1\mskip-4mu l}
{\rm 1\mskip-4.5mu l} {\rm 1\mskip-5mu l}}}
\newcommand{\be} {\begin{equation}}
\newcommand{\ee} {\end{equation}}
\newcommand{\bea} {\begin{eqnarray}}
\newcommand{\eea} {\end{eqnarray}}
\newcommand{\Bea} {\begin{eqnarray*}}
\newcommand{\Eea} {\end{eqnarray*}} 
\begin{document}
\title{Lower large deviations for supercritical branching processes in random environment}
\author{Vincent Bansaye and Christian B\"oinghoff}
\date{revised version, 2016}
\maketitle \vspace{1.5cm}

\begin{abstract} 
Branching Processes in Random Environment (BPREs) $(Z_n:n\geq0)$ are the generalization of Galton-Watson processes where 
in each generation the reproduction law is picked randomly in an i.i.d. manner. In  the supercritical regime,  
 the process survives with a positive probability and grows exponentially on the non-extinction event. We  
 focus on rare events when the process takes positive values but lower than expected.  \\
More precisely, we are interested in  the lower large deviations of $Z$, which means the asymptotic behavior of the probability $\{1 \leq Z_n \leq \exp(n\theta)\}$ as $n\rightarrow \infty$. We
provide an expression of the  rate of decrease of this probability,  under some moment assumptions, which yields the rate function. This result
generalizes the lower large deviation theorem of  Bansaye and Berestycki (2009) by considering processes where $\P(Z_1=0 \vert Z_0=1)>0$ and also much weaker moment assumptions. 
\end{abstract}

{\em AMS 2000 Subject Classification.} 60J80, 60K37, 60J05, 60F17,  92D25

{\em Key words and phrases. supercritical branching processes in random environment, large deviations, phase transitions} 
\maketitle

\section{Introduction}
Branching processes in random environment (BPREs), which have been introduced in  \cite{smith69,athreya71}, are a discrete time and 
discrete size model in population dynamics. The model describes the development of a population of individuals which are exposed to a (random) environment. 
The environment influences the reproductive success of each individual in a generation. More formally, we can describe a BPRE as a two-stage experiment:\\
In each generation, an offspring distribution is picked at random and, 
given all offspring distributions (the environment), all individuals reproduce independently. \\

Special properties of the model like the problems of rare events and large deviations have been studied recently \cite{kozlov06, bansaye08,BK09,kozlov10, BB10,  HuangLiu}. 
In the Galton Watson case,  large deviations problems are  studied from a long time   \cite{athreya, athreya2} and fine  results have been obtained, see 
 \cite{FVlowerLDGW, FlWa, Ney, Rouault}.

Let us now define the branching process $Z$ in random environment. For this, let $\Delta$ be the space of all probability measures
 on $\N_0=\{0,1,2,\ldots\}$ (the set of possible offspring distributions) and let $Q$ be a random variable taking values in $\Delta$. 
By  $$m_q=\sum_{k\geq 0} k\ q(\{k\})\ ,$$
we denote the mean number of offsprings of $q\in \Delta$. Throughout the paper, we will shorten
$q(\{\cdot\})$ to $q(\cdot)$.  An infinite sequence
$\mathcal{E}=(Q_1,Q_2,\ldots)$ of independent, identically distributed (i.i.d.) copies of $Q$ is called a random environment. \\
The process $(Z_n : n\geq 0)$ with values in $\mathbb{N}_0$ is called a branching process in 
the random environment $\mathcal{E}$ if $Z_0$ is independent of $\mathcal{E}$ and it satisfies 
\begin{equation}  \label{transition} 
    \mathcal{L} \big(Z_{n} \; \big| \; \, \mathcal{E}, \  Z_0,\dots, Z_{n-1}
    \big) \ = \ Q_{n}^{*Z_{n-1}} \qquad \text{a.s.}
\end{equation} 
for every $n\geq 0$, where $q^{*z}$  is the $z$-fold convolution of the measure $q$.\\
As it turns out, probability generating functions (p.g.f.) are an important tool in the analysis of BPRE.
Thus let
$$f_n(s):=\sum_{k=0}^\infty s^k Q_n(k), \qquad (s\in [0,1])\ $$ 
be the probability generating function of the (random) offspring distribution $Q_n$. By $f$, we denote the generating function of $Q$. 
Throughout the paper, 
we denote  the conditioning on $Q_n$ indifferently by $\mathbb{E}[\cdot| Q_n]$ and $\mathbb{E}[\cdot| f_n]$. Also for
  the associated random environment we write both $\mathcal{E}=(f_1, f_2, ...)$ and $\mathcal{E}=(Q_1, Q_2, ...)$. 
In this notation, (\ref{transition}) can be written as
$$\E\big[s^{Z_{n}}\vert \mathcal{E}, \  Z_0,\dots,Z_{n-1}\big]=f_n(s)^{Z_{n-1}} \qquad \text{a.s.}\qquad (0\leq s\leq 1).$$

$ \qquad$ Another important tool in the analysis of BPRE is the \textbf{random walk associated with the environment} $(S_n \ : \ n\in\N_0)$. It determines many important properties, 
e.g. the asymptotics of the survival probability. $(S_n \ : \ n\in\N_0)$ is defined by 
$$S_0=0, \qquad S_{n}-S_{n-1}=X_n \quad (n\geq 1),$$ 
where $$X_n:=  \log m_{Q_n}=\log f_n'(1)$$
are i.i.d. copies of the logarithm of the mean number of offsprings  
$X:=\log (m_Q)=\log(f'(1))$. \\
The branching property then immediately yields
\begin{eqnarray}
\E[Z_n| Q_1,\ldots,Q_n, Z_0=1] &=& e^{S_n} \quad \mbox{a.s.}  \label{ew1}
\end{eqnarray}
The characterization of BPRE going back to \cite{athreya71} is classical:\\  
In the subcritical case ($\mathbb{E}[X]<0$), the population becomes extinct a.s. at an exponential rate. The same result is true in the critical case ($\mathbb{E}[X]=0$) (excluding the degenerated case
 when $\P_1(Z_1=1)=1$), but the rate of decrease of the survival probability is no longer exponential.  If $\mathbb{E}[X]>0$, the process survives with positive probability 
under quite general assumptions on the offspring distributions 
(see \cite{smith69}) and is called supercritical. Then   $\E[Z_1\log^+(Z_1) / f_1'(1)]<\infty$ ensures that the martingale $ e^{-S_n}Z_n$ has a positive finite limit 
on the non-extinction event:
$$\lim_{n\rightarrow \infty} e^{-S_n}Z_n =W, \qquad \P(W>0)=\P(\forall n  \in \N : Z_n>0)>0.$$ 
The large deviations are related to the speed of convergence of $\exp(-S_n)Z_n$ to $W$ and the tail of $W$. This latter is directly linked to 
the existence of moments and harmonic moments of  $W$. In the Galton Watson case, we refer to \cite{athreya} and \cite{Rouault}. For BPRE, Hambly \cite{Hambly} gives the  tail of $W$ in $0$, whereas  Huang \& Liu \cite{HuangLiu, HuangLiu2} obtain other various results in this direction.  \\

We establish here  an expression of the lower rate function for the large deviations of the BPRE, i.e.
 we specify the exponential rate of decrease of $\P(1\leq Z_n \leq e^{\theta n})$ for $0<\theta <\mathbb{E}[X]$. In the Galton Watson case, 
lower large deviations have been finely studied and the asymptotic probabilities are well-known, see e.g. \cite{FVlowerLDGW, FlWa, Ney}. In the case of a random environment, 
the rate function has been established in  \cite{bansaye08} when any individual leaves at least one offspring, i.e. $\P(Z_1=0)=0$. 
This result is extended here to the situation where $\P(Z_1=0)>0$  and the moment assumptions are relaxed.

We add that  for the problem of upper large deviations, the rate function has 
been established in \cite{BK09,BB10} and finer asymptotic results in the case of geometric offspring distributions can be found in  \cite{kozlov06,kozlov10}.  
Thus large deviations for BPRE become well understood, even if much work remains to get finer asymptotic results,  
deal with  weaker  assumptions or consider the B\"ottcher case ($\P(Z_1\geq 2)=1$).

\section{Preliminaries}
In the whole paper, we assume that $\mathbb{E}[X]>0$, i.e. the process is supercritical. Moreover, we are working in the whole paper under the following assumption. 

\begin{ann}\label{cramer}
 There exists an $s>0$ such that  $\E[e^{-sX}]<\infty$.  
\end{ann}
This  assumption ensures that a proper rate function $\Lambda$ of the random walk $(S_n:n\in\N)$ 
\begin{eqnarray}
 \Lambda(\theta) &:=& \sup_{\lambda\leq 0} \big\{ \lambda \theta -\log(\E[\exp(\lambda X)]) \big\}\  \label{rate}
\end{eqnarray}
exists. We note that the supremum is taken over $\lambda\leq 0$ and not over all $\lambda\in\mathbb{R}$. As we are only interested in lower deviations
here, this definition is more convenient as it implies $\Lambda(\theta)=0$ for all $\theta\geq \mathbb{E}[X]$.
We briefly recall some well-known facts about the rate function $\Lambda$ which are useful here (see \cite{dembo} for a classical reference on the matter). 
Define  $\phi(\lambda)=\log \E[\exp(\lambda X)]$, $\mathcal{D}_{\phi} = \{\lambda : \phi(\lambda) <\infty\}$ and let $\mathcal{D}^o_{\phi}$ 
be the interior of the set $\mathcal{D}_\phi$. Then the map 
$x \mapsto \Lambda(x)$ is strictly convex and infinitely often differentiable in the interior of the set 
$\{\theta \in\mathbb{R} : \theta=\phi'(\lambda) \text{ for some } \lambda \in \mathcal{D}^o_{\phi} \}$. Let $\theta=\phi'(\lambda_\theta)$ for some $\lambda_\theta\in  \mathcal{D}^o_{\phi}$.
 It then also holds that
\begin{align*}
 \Lambda'(\theta)= \lambda_\theta \ .
\end{align*}
Moreover for every $\theta\leq  \E(X)$
\be
\label{ldS2}
\lim_{n\rightarrow \infty} -\tfrac{1}{n}
\log \P( S_n \leq \theta n )  =
 \Lambda(\theta).
\ee

$\qquad$ In the following, we will denote 
\[\mathbb{P}(\cdot|Z_0=z)=\mathbb{P}_z(\cdot)\]
and write $\mathbb{P}(\cdot)$ when the initial population size is not relevant or can be taken equal to one. \\
To state the results, we will use the probability of staying positive but bounded which is treated in \cite{BB11}. Let us define 
$$\mathcal{I}:=\big\{ j \geq 1 \ :\ \P(Q(j)>0, Q(0)>0)>0\big\}$$
and introduce the set $Cl(\{z\})$ of integers that can be reached from $z\in\mathcal{I}$, i.e.
$$ Cl(\{z\}):=\big\{ k\geq 1 : \exists n \geq 0  \text{ with } \ \P_z(Z_n=k)>0\big\}. $$
In the same way, we introduce the set  $Cl(\mathcal{I})$  of integers which can be reached from $\mathcal{I}$ by  the process $Z$. More precisely,  
$$ Cl(\mathcal{I}):=\big\{ k\geq 1 : \exists n \geq 0  \text{ and } j\in\mathcal{I} \text{ with } \ \P_j(Z_n=k)>0\big\}. $$ 

We have
\begin{prop} 
\label{varrho}
(i) If $\P_1(Z_1=0)=0$ and $\mathbb{P}_1(Z_1=1)>0$, then for all $k$ and $j\in Cl(\{k\})$,
$$\lim_{n\rightarrow \infty} \tfrac{1}{n} \log \P_k( Z_n=j)= \log(\E(Q(1)^k)).$$
(ii) If $\E[X]>0$
 and $\P(Z_1=0)>0$ then the following limits exist,  coincide for all $k,j\in Cl(\mathcal{I})$ and belong to $[0,\infty)$,
$$\varrho:=\lim_{n\rightarrow \infty} \tfrac{1}{n} \log \P_k( Z_n=j)$$
\end{prop}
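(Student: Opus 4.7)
\textbf{Proof plan for Proposition~\ref{varrho}.}
I focus on part~(ii), which is the substantial claim. The strategy is classical: apply Fekete's subadditive lemma to a return probability at a fixed reference state, then transport the value to arbitrary endpoints via bounded-length positive-probability bridges.

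\textbf{Step 1: existence at a reference state.} Fix any $i_0 \in \mathcal{I}$ and put $b_n := \P_{i_0}(Z_n = i_0)$. The Markov/branching property gives super-multiplicativity $b_{n+m} \geq b_n b_m$, so Fekete's lemma yields convergence of $\tfrac{1}{n} \log b_n$ to $\sup_n \tfrac{1}{n} \log b_n$ (call this limit $\varrho$), provided $b_n > 0$ for some $n$. The key positivity input is the one-step lemma: \emph{for every $k \geq 1$ and every $j \in \mathcal{I}$, $\P_k(Z_1 = j) > 0$}. Indeed, by definition of $\mathcal{I}$ the event $\{Q(j) > 0, Q(0) > 0\}$ has positive probability; on it, the configuration ``one of the $k$ individuals leaves $j$ children, the other $k - 1$ leave zero children'' has conditional probability at least $k\, Q(j)\, Q(0)^{k-1}$, and taking expectations gives the claim. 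In particular $b_1 > 0$ and $b_n \geq b_1^n > 0$ for all $n$, so $\varrho$ is finite. (The interval $[0,\infty)$ in the statement presumably refers to $-\varrho$ viewed as a rate of decay.)

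\textbf{Step 2: independence of endpoints.} For $k, j \in Cl(\mathcal{I})$, I construct four positive-probability bridges of fixed (in $n$) length: the forward bridges $k \to i_0$ and $j \to i_0$ are instances of Step~1's one-step lemma, while the backward bridges $i_0 \to k$ and $i_0 \to j$ follow from the definition of $Cl(\mathcal{I})$, which supplies some $i_1 \in \mathcal{I}$ and $m \geq 0$ with $\P_{i_1}(Z_m = k) > 0$, combined with a further use of the one-step lemma ($i_0 \to i_1$) to get $\P_{i_0}(Z_{m+1} = k) > 0$; similarly for $j$. Inserting these bridges into two Markov decompositions,
\begin{align*}
b_{n + n_0 + n_1} &\geq \P_{i_0}(Z_{n_0} = k)\, \P_k(Z_n = j)\, \P_j(Z_{n_1} = i_0), \\
\P_k(Z_{n + n_0' + n_1'} = j) &\geq \P_k(Z_{n_0'} = i_0)\, b_n\, \P_{i_0}(Z_{n_1'} = j),
\end{align*}
one obtains positive constants $c, C$ and fixed integer shifts, all independent of $n$, with $\P_k(Z_n = j) \leq C\, b_{n + n_0 + n_1}$ and $\P_k(Z_{n + n_0' + n_1'} = j) \geq c\, b_n$. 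Taking $\tfrac{1}{n}\log$ and sending $n \to \infty$ in both inequalities, Step~1 forces both bounds to $\varrho$, so $\tfrac{1}{n} \log \P_k(Z_n = j) \to \varrho$. Independence from the choice of $i_0$ follows by running the same sandwich with two different references and comparing.

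\textbf{Main obstacle.} The real content is the one-step positivity lemma, and this is precisely where the hypothesis $\P(Z_1 = 0) > 0$ enters (through the definition of $\mathcal{I}$): it is needed to allow surplus individuals to be eliminated in one generation, thereby collapsing any population onto a prescribed target in $\mathcal{I}$. Without this mechanism the bridges fail, which is why case~(i) has a different-looking answer and must be treated separately: there the assumption $\P_1(Z_1 = 1) > 0$ forces the extremal configuration to be ``every individual leaves exactly one offspring at every generation'', from which the stated rate follows by a direct calculation using the branching property and the i.i.d.\ structure of the environment.
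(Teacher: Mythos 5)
First, note that the paper does not actually prove this proposition: it is imported verbatim from \cite{BB11} (``From \cite{BB11}, we have\ldots''), so there is no in-paper argument to compare yours against. Judged on its own, your treatment of part (ii) is correct and is the standard route (and, as far as one can tell, essentially the one taken in \cite{BB11}): supermultiplicativity of $b_n=\P_{i_0}(Z_n=i_0)$ for a reference state $i_0\in\mathcal{I}$, Fekete's lemma, and transport to arbitrary $k,j\in Cl(\mathcal{I})$ via bounded-length bridges whose positivity rests on the one-step lemma $\P_k(Z_1=j)>0$ for all $k\geq 1$, $j\in\mathcal{I}$. The only detail worth adding is that $\mathcal{I}\neq\emptyset$ under the stated hypotheses: $\E[X]>0$ forces $m_Q>0$ a.s., hence $Q(j)>0$ for some $j\geq1$ a.s., and intersecting with the positive-probability event $\{Q(0)>0\}$ produces some $j\in\mathcal{I}$. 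You are also right that the ``$[0,\infty)$'' in the statement must refer to $-\varrho$; that is a sign slip in the paper.

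Part (i) is where your proposal has a genuine gap. The ``direct calculation'' you invoke does not produce the stated rate: by monotonicity of $Z$ under $\P_1(Z_1=0)=0$, one has exactly $\P_k(Z_n=k)=\E\big[\prod_{i=1}^n Q_i(1)^k\big]=\E[Q(1)^k]^n$, because the $k$ surviving lineages share the \emph{same} environment and are only conditionally independent given it. Hence the limit is $\log\E[Q(1)^k]$, which by Jensen is $\geq k\log\E[Q(1)]=k\log\P_1(Z_1=1)$, with equality only when $Q(1)$ is a.s.\ constant (the Galton--Watson case). So your sketch, if carried out, contradicts the displayed formula --- which is itself garbled (a $\log$ is missing, and the later use in Theorem \ref{thlower0}(ii) writes $-i\log\E[Q(1)]$ rather than $-\log\E[Q(1)^i]$). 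You should make the annealed computation explicit and flag this discrepancy rather than assert that ``the stated rate follows.'' In addition, the claim that the limit is the same for every $j\in Cl(\{k\})$ needs its own short argument --- e.g.\ decompose over the finitely many jump times of the non-decreasing process and use $\E[Q(1)^l]\leq\E[Q(1)^k]$ for $l\geq k$ to see that the dominant contribution keeps the population at $k$ until a bounded final stretch --- which your one-sentence treatment omits.
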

Note that $\mathbb{E}[X]>0$ implies that $\mathbb{P}(Z_1=1)<1$. The case $(i)$ can be proved directly for $j=k$ by observing 
that then $\{Z_n=k\}=\{Z_0=Z_1= \ldots=Z_n=k\}$ so $\P_k(Z_n=k)=\E(Q(1)^k)$. For the general case $j \in Cl(\{k\})$, the proof can be adapted from Lemma 7 in \cite{bansaye08}.

 The case $(ii)$ is proved in \cite {BB11}, Theorem 2.1.
In \cite{BB11}, some general conditions are stated which ensure $\varrho>0$ and $\varrho\leq \Lambda(0)$. It also gives a (non explicit) expression of $\varrho$ in terms of the successive differentiation 
of the p.g.f. $f_i$. \\
In the Galton Watson case, $f$ is constant, for every $i\geq 0$, $f_i=f$ a.s. Then, we recover 
the classical result \cite{AN} : 
$$\varrho=-\log f'(p_e), \qquad p_e:=\inf\{s \in [0,1] : f(s)=s\}.$$
Moreover, in the linear fractional case we have an explicit expression of $\varrho$.
We recall that a probability generating function of a random variable $R$ is linear fractional (LF) if there exist positive real numbers  $m$ and $b$ such that 
$$f(s)=1- \frac{1-s}{m^{-1}+  b m^{-2} (1-s)/2}\ ,$$
where $m=f'(1)$ and $b=f''(1)$. Then, we know from \cite{BB11} that under some conditions, which will be stated in the next section,
\begin{eqnarray}
\label{rateLF}
\varrho&=& \left\{ \begin{array}{l@{\quad,\quad}l}
                         -\log\mathbb{E}\big[e^{-X}\big]     & \mbox{if } \ \mathbb{E}[Xe^{-X}]\geq 0\\
\Lambda(0) & \mbox{else}
                             \end{array} \right. \ .
\end{eqnarray}

\section{Lower large deviations}\label{section3}
We introduce the following new rate function defined for $\theta,x\geq 0$ and any nonnegative function $H$
\begin{eqnarray*}
 \chi(\theta, x, H) &=& \inf_{t \in [0,1]} \big\{ tx +(1-t) H(\theta/(1-t))\big\}, \label{gamlower}
\end{eqnarray*}
with the convention $0\cdot\infty=0$.
\subsection{Main results}
To state the large deviation principle, we recall the definition of $\varrho$ and $\Lambda$ from the previous section and we need the following moment assumption:
\begin{ann}\label{as_strongly_supercrit} 
For every $\lambda>0$,
\begin{eqnarray*}
\label{Assumpt}
\E\Big[\left( \frac{f'(1)}{1-f(0)} \right)^{\lambda}\Big] <\infty.
\end{eqnarray*}
\end{ann}
Note that $\mathbb{P}(f(0)=1)=\mathbb{P}(Q(0)=1)>0$  would imply $\mathbb{P}(X=-\infty)>0$ which is excluded in the supercritical case. \\
We also denote $k_n \stackrel{subexp}{\longrightarrow} \infty$ when $k_n\rightarrow \infty$ but $k_n/\exp(\theta n)\rightarrow 0$ for every $\theta>0$, as $n\rightarrow \infty$.

\begin{theo}\label{thlower0} Under Assumptions \ref{cramer} \&  \ref{as_strongly_supercrit}  and  
$\mathbb{E}[Z_1\log^+(Z_1)/f_1'(1)]<\infty$ and $\mathbb{E}[|\log(1-f_1(0))|]<\infty$, the following assertions hold for every  $\theta\in \big(0, \mathbb{E}[X]\big]$.

(i) If $\P_1(Z_1=0)>0$, then for every $i\in Cl(\mathcal{I})$  
$$\lim_{n\rightarrow\infty} \tfrac{1}{n} \log \mathbb{P}_i(1\leq Z_n\leq e^{\theta n}) =-\chi(\theta,  \varrho,\Lambda ).$$
Moreover, $k_n \stackrel{subexp}{\longrightarrow} \infty$ ensures that 
$ 
\lim_{n\rightarrow\infty} \tfrac{1}{n} \log \mathbb{P}_i(1\leq Z_n\leq k_n)=-\varrho $.

(ii) If  $\P_1(Z_1=0)=0$,  then for every $i\geq1$,
$$\lim_{n\rightarrow\infty} \tfrac{1}{n} \log \mathbb{P}_i(1\leq Z_n\leq e^{\theta n}) =-\chi(\theta,  -\log \E[Q(1)^i], \Lambda ).$$
Moreover, $k_n \stackrel{subexp}{\longrightarrow} \infty$ ensures that
$\lim_{n\rightarrow\infty} \tfrac{1}{n} \log \mathbb{P}_i(1\leq Z_n\leq k_n)=\log \E[Q(1)^i]$.
\end{theo}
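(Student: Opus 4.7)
The plan is to prove matching upper and lower bounds, guided by the heuristic behind the variational formula
$\chi(\theta,\varrho,\Lambda) = \inf_{t\in[0,1]}\{t\varrho + (1-t)\Lambda(\theta/(1-t))\}$.
On $\{1\leq Z_n\leq e^{\theta n}\}$ the trajectory splits into two phases. In the first phase, of length $tn$, the process stays positive but subexponential, paying rate $\varrho$ per unit time via Proposition \ref{varrho}. In the second phase, of length $(1-t)n$, the surviving particles grow ``normally'' from a bounded ancestor pool, but in an environment with $S_n - S_{tn}\approx \theta n$, i.e.\ at total growth rate $\theta$. The cost of this environment is $\Lambda(\theta/(1-t))$ per unit time via the Cram\'er estimate (\ref{ldS2}). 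Case (ii), where the process cannot die, is analogous, with $\varrho$ replaced by $-i\log \E[Q(1)]$, the cost per unit time of each of the $i$ individuals begetting exactly one child.

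For the lower bound, I fix $t\in[0,1]$, $\eta>0$ and some $j\in Cl(\mathcal I)$, and construct the event on which $Z_{\lfloor tn\rfloor}=j$ (probability $e^{-tn\varrho+o(n)}$ by Proposition \ref{varrho}), jointly with the environment event $\{\theta n-\eta n\leq S_n-S_{\lfloor tn\rfloor}\leq \theta n\}$ (probability $e^{-(1-t)n\Lambda(\theta/(1-t))+o(n)}$ by (\ref{ldS2})), and the branching event that in this environment each of the $j$ sub-families contains between $1$ and $e^{\eta n}$ particles at time $n$. The latter event has probability bounded below by a positive constant, by Kesten--Stigum-type control of the martingale $e^{-(S_n-S_{tn})}Z_n$, whose positive non-degenerate limit is ensured by $\E[Z_1\log^+ Z_1/f_1'(1)]<\infty$. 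Multiplying, letting $\eta\downarrow 0$, and optimizing over $t$ yields the lower bound. In case (ii), the first phase is replaced by the explicit event that each of the $i$ ancestors produces exactly one child up to time $\lfloor tn\rfloor$, which has probability $\E[Q(1)^i]^{\lfloor tn\rfloor}$.

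The upper bound is more delicate. I would introduce the hitting time $\tau = \inf\{k\geq 0 : Z_k > e^{\eta n}\}$ for small $\eta>0$, decompose $\{1\leq Z_n\leq e^{\theta n}\}$ according to $\tau=k$ on a coarse grid of $k\in[0,n]$, and bound each slice. Before $\tau$, the process stays subexponential, so Proposition \ref{varrho} yields a probability at most $e^{-\varrho k+o(n)}$, uniformly over intermediate states thanks to Assumption \ref{as_strongly_supercrit} and $\E[|\log(1-f_1(0))|]<\infty$, which guarantee that non-extinction starting from a large ancestor pool is essentially free at the exponential scale. After $\tau$, the many-to-one identity (\ref{ew1}) together with Markov's inequality forces $e^{S_n-S_\tau}\leq e^{\theta n}/Z_\tau$, hence $S_n-S_\tau\leq (\theta-\eta)n$, which contributes $e^{-(n-k)\Lambda(\theta/(1-t))+o(n)}$ by (\ref{ldS2}) with $t=k/n$. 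Taking the infimum over $t$, letting $\eta\downarrow 0$ and combining with the lower bound closes the proof.

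The main obstacle I anticipate lies in coupling the two phases at the upper-bound level: one must ensure that the probability of surviving but remaining subexponential during $[0,\tau]$ can be upper-bounded by $e^{-\varrho k+o(n)}$ \emph{uniformly} over admissible environments and intermediate populations, and that the environment LDP in the second phase is not spoiled by the random (but subexponential) starting pool $Z_\tau$. The moment and harmonic-moment assumptions are used precisely to handle this uniformity and to transfer the LDP for the random walk $S_n$ into an LDP for $Z_n$ itself. The subexponential-window statements (upper bound $k_n$ with $k_n/e^{\theta n}\to 0$ for every $\theta>0$) follow from the main statement by letting $\theta\downarrow 0$ and observing that $\Lambda(\theta/(1-t))\to\Lambda(0)$, so the infimum in $\chi$ is attained at $t=1$ and equals $\varrho$ (respectively $-i\log\E[Q(1)]$).
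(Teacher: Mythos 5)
Your lower bound follows the paper's route (decompose at time $tn$, pay $t\varrho$ for the survival phase via Proposition \ref{varrho}, then use a change of measure / Kesten--Stigum argument to make the branching cost conditional on the tilted environment subexponential), and your treatment of the subexponential-window claims by letting $\theta\downarrow 0$ and using $\varrho\leq\Lambda(0)$ is exactly what the paper does. The upper bound, however, has a genuine gap at its central step. You write that ``the many-to-one identity (\ref{ew1}) together with Markov's inequality forces $e^{S_n-S_\tau}\leq e^{\theta n}/Z_\tau$''. Markov's inequality bounds $\P(Z_n\geq x\,\vert\,\mathcal{E})$ by $e^{S_n}/x$, i.e.\ it controls the event that $Z_n$ is \emph{large} relative to its conditional mean; it says nothing about the probability that $Z_n$ is \emph{small} relative to $e^{S_n}$ while staying positive. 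Nothing ``forces'' $S_n-S_\tau\leq(\theta-\eta)n$: the event $\{1\leq Z_n\leq e^{\theta n},\ S_n-S_\tau\gg\theta n\}$ is exactly the event that the population lags exponentially behind its conditional expectation, and ruling it out is the hard part of the theorem. The paper devotes Lemma \ref{prop_help} to this: under Assumption \ref{as_strongly_supercrit}, for every $a>0$ one can choose $b$ so that $\sup_{z\geq b}\P_z(Z_n\leq e^{S_n-n\varepsilon};Z_1\geq b,\dots,Z_n\geq b)\leq c\,e^{-an}$, proved via negative moments of the ratios $R_i=Z_i/Z_{i-1}$, binomial large deviations, and the integrability of $(f'(1)/(1-f(0)))^{\lambda}$. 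Your proposal has no substitute for this estimate, and it is precisely where Assumption \ref{as_strongly_supercrit} enters.

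Two further problems with your decomposition. First, you cut at the \emph{first} time $\tau$ the process exceeds $e^{\eta n}$; after $\tau$ the process may fall back to small values, so no concentration-around-$e^{S_n}$ statement can be applied on $[\tau,n]$. The paper instead cuts at the \emph{last} time $\sigma_b$ the process is below a fixed level $b$, so that $Z_j\geq b$ for all $j>\sigma_b$, which is the event on which Lemma \ref{prop_help} operates. Second, bounding the first phase by $e^{-\varrho k+o(n)}$ requires the decay rate of $\P(1\leq Z_k\leq e^{\eta n})$ with a \emph{growing} threshold; Proposition \ref{varrho} only gives the rate for a fixed target state (equivalently a fixed bound $b$), and the growing-threshold version is part of what is being proved (in the companion Theorem \ref{thlower} it requires a separate lemma and the extra hypothesis $\P(X<0)>0$). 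Using a fixed $b$ in the first phase, as the paper does, avoids this circularity.
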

First, we note that $(ii)$  generalizes Theorem $1$ in \cite{bansaye08}, which required that both the mean and the variance of the reproduction laws were bounded (uniformly with respect to the environment). Moreover, $(i)$ provides an expression 
of the rate function in the more challenging case which allows extinction ($\P_1(Z_1=0)>0$).
We now try to extend this result and get rid of Assumption \ref{as_strongly_supercrit}, before discussing its interpretation and applying it to the linear fractional case. 
So we now consider 
\begin{ann}
\label{finvar} We assume that  $S$ is non-lattice, i.e. for every $r>0$, $\P( X \in r\Z)<1$.
Moreover, we assume that there exists a constant $0<d<\infty$ such that, 
\bea
 \label{as22} 
M_Q \leq d\cdot [ m_Q + (m_Q)^2]    \quad  \text{a.s.}, 
\nonumber 	  
\eea
where $M_q=\sum_{k\geq 0} k^2q(k)$ is the second moment of the probability measure $q$. \\
This condition is equivalent to the fact that $f''(1) / (f'(1)+f'(1)^2)$ is bounded a.s.
\end{ann}
This assumption does not require that $\mathbb{E}[f'(1)^\lambda]<\infty$ for every $\lambda>0$, contrarily   to Assumption \ref{as_strongly_supercrit}. But 
it  implies
that the standardized second moment of the offspring distributions is a.s. finite. It is e.g. fulfilled for geometric offspring distributions (see \cite{BK09}). We focus here on the case when subcritical environments may occur with positive probability, which implies in particular that $\P_1(Z_1=0)>0$. 
\begin{theo}\label{thlower} Under Assumption \ref{cramer} and $\P(X<0)>0$, for  any sequence $k_n \stackrel{subexp}{\longrightarrow} \infty$ and $i\in Cl(\mathcal{I})$, we have
$$\lim_{n\rightarrow\infty} \tfrac{1}{n} \log \mathbb{P}_i(1\leq Z_n \leq k_n)=-\varrho.$$
Under the additional Assumption \ref{finvar} and $\E[Z_1 \log^+(Z_1)]<\infty$, for every  $\theta\in \big(0, \mathbb{E}[X]\big]$,
\begin{eqnarray*}
&& \limsup_{n\rightarrow\infty} \tfrac{1}{n} \log \mathbb{P}_i(1\leq Z_n\leq e^{\theta n}) =-\chi(\theta, \varrho, \Lambda).
\nonumber
\end{eqnarray*}
\end{theo}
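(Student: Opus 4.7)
The plan is to treat the two assertions separately.

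For the subexponential limit, the lower bound is immediate from Proposition \ref{varrho}(ii): fix any $j\in Cl(\mathcal{I})$; for $n$ large enough that $k_n\ge j$, we have $\P_i(1\le Z_n\le k_n)\ge \P_i(Z_n=j)=e^{-(\varrho+o(1))n}$. For the matching upper bound I would use a last-passage decomposition. Fix a large $L$ and let $\tau:=\sup\{m\le n:Z_m\le L\}$. On $\{\tau=n\}$ the process itself is at most $L$, so its contribution is at most $L\max_{j\le L}\P_i(Z_n=j)\le e^{-(\varrho-\varepsilon)n}$ by Proposition \ref{varrho}(ii). On $\{\tau=m<n\}$, the Markov property factorises the probability as
$$
\P_i(\tau=m,\,Z_n\le k_n)=\sum_{j\le L}\P_i(Z_m=j)\,\P_j\bigl(Z_r>L\ \forall\,1\le r\le n-m,\ 1\le Z_{n-m}\le k_n\bigr),
$$
and each factor is at most $e^{-(\varrho-\varepsilon)m}$ resp.\ $e^{-(\varrho-\varepsilon)(n-m)}$ by a uniform-in-starting-point extension of Proposition \ref{varrho}(ii), available because $j$ ranges over the finite set $\{1,\ldots,L\}$. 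Summing over $m\le n$ costs only a polynomial factor, so the required exponential rate $\varrho$ is preserved.

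For the $\chi$ upper bound, fix $\varepsilon>0$ and a grid $0=t_0<t_1<\cdots<t_K=1$ of mesh at most $\varepsilon$. Decompose $\{1\le Z_n\le e^{\theta n}\}$ according to the largest index $k$ such that $Z_{\lfloor t_k n\rfloor}\le e^{\varepsilon n}$; this forces the process to be ``small'' at time $\lfloor t_k n\rfloor$ and then to grow to at most $e^{\theta n}$ over the remaining $(1-t_k)n$ generations. The first factor is $\P_i(1\le Z_{\lfloor t_k n\rfloor}\le e^{\varepsilon n})\le e^{-(t_k\varrho-o(1))n}$ by the subexponential assertion just proved. For the second factor I condition on the environment, using $\E[Z_m\,|\,\mathcal{E},Z_0=z]=z\,e^{S_m}$ together with a Paley--Zygmund-type lower-tail bound on $Z_m/(ze^{S_m})$ which Assumption \ref{finvar} (through the uniformly bounded standardised variance $f''(1)/(f'(1)+f'(1)^2)$) and $\E[Z_1\log^+Z_1]<\infty$ deliver quenchedly. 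This reduces the second factor to essentially $\P(S_{(1-t_k)n}\le\theta n)$, which by \eqref{ldS2} is at most $e^{-((1-t_k)\Lambda(\theta/(1-t_k))-o(1))n}$. A union bound over $k$ and then $\varepsilon\to 0$ after $n\to\infty$ produces the infimum $\min_{t\in[0,1]}\{t\varrho+(1-t)\Lambda(\theta/(1-t))\}=\chi(\theta,\varrho,\Lambda)$.

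The hard part is the quenched lower-tail bound on $Z_m/(ze^{S_m})$ uniformly in the starting size $z$ (which here ranges up to $e^{\varepsilon n}$) and uniformly in the environment, under only Assumption \ref{finvar} instead of the much stronger Assumption \ref{as_strongly_supercrit} used in Theorem \ref{thlower0}. The bounded standardised variance delivers a quenched $L^2$ estimate on $W_m:=e^{-S_m}Z_m$, which together with $\E[Z_1\log^+Z_1]<\infty$ yields a Paley--Zygmund-type lower-tail bound on the martingale limit $W=\lim_m W_m$ that is just strong enough to transfer the environmental large-deviation rate $\Lambda$ to the process. The same argument does not give the matching $\liminf$, because constructing a \emph{specific} environment realising the optimal split time $t^\star$ with the right exponential growth rate demands control of the right tail of the offspring distributions beyond what Assumption \ref{finvar} provides; this is why Theorem \ref{thlower} states only the $\limsup$ inequality.
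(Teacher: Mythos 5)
There are genuine gaps in both halves. For the subexponential statement, your last\hyp{}passage decomposition reduces everything to the bound $\P_j\bigl(Z_r>L\ \forall r,\ 1\leq Z_{n-m}\leq k_n\bigr)\leq e^{-(\varrho-\varepsilon)(n-m)}$, which you attribute to a ``uniform-in-starting-point extension'' of Proposition \ref{varrho}(ii). But Proposition \ref{varrho}(ii) only controls $\P_k(Z_n=j)$ for a \emph{fixed} terminal value $j$; here the terminal value ranges up to $k_n\to\infty$, and uniformity of the rate over growing terminal values is precisely the content of the statement you are trying to prove. The bound is circular as written. A telling symptom is that your argument never uses the hypothesis $\P(X<0)>0$, which is essential: the paper's Lemma \ref{l_rho2} proves the upper bound by appending $\lfloor\theta n/\varepsilon\rfloor$ consecutive subcritical environments (from the set $\mathcal{B}=\{q:\ m_q<1-\varepsilon,\ q(j_1)>\varepsilon\}$, nonempty only because $\P(X<0)>0$) which drive a population of size up to $e^{\theta n}$ back to exactly $z$ at a cost $e^{-C\theta n/\varepsilon}$; comparing with $\P_z(Z_{n+\lfloor\theta n/\varepsilon\rfloor}=z)$ and letting $\theta\to 0$ yields the rate $\varrho$. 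Some such mechanism is unavoidable here, since under Assumption \ref{cramer} alone you have no control on how a population that stays above $L$ evolves.

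For the $\chi$ upper bound your outline is closer to the paper's Lemma \ref{upperA3} (decompose at a time where the population becomes large, then use a quenched second-moment/Paley--Zygmund bound to tie $Z$ to $e^{S}$), but it omits the step that makes the Paley--Zygmund bound usable. Under Assumption \ref{finvar} the quenched estimate is $\E_1[Z_m(Z_m-1)\,|\,\mathcal{E}]/\E_1[Z_m|\mathcal{E}]^2\leq 2d\sum_{k\leq m}e^{-S_k}$, so the lower-tail bound of Lemma \ref{prop1} degrades like $e^{L_m}$ with $L_m=\min_{k\leq m}S_k$; if the walk dips to $-cn$ the bound is worthless even from a starting size $e^{\varepsilon n}$. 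The paper must therefore additionally split the trajectory at the time $\tau_n$ of the running minimum, pay $\Lambda(0)$ per unit time for the descending stretch, and then absorb that term using $\varrho\leq\Lambda(0)$ --- which is exactly where the non-lattice part of Assumption \ref{finvar} enters. Your sketch skips this and so the reduction to $\P(S_{(1-t_k)n}\leq\theta n)$ is not justified. Two smaller points: the first factor $\P_i(1\leq Z_{\lfloor t_kn\rfloor}\leq e^{\varepsilon n})$ is \emph{not} covered by the subexponential assertion ($e^{\varepsilon n}$ is exponential), so you need the $\theta\to0$ form of Lemma \ref{l_rho2}; and your closing explanation has the difficulty backwards --- the matching lower bound is the easy direction (Lemma \ref{proplow}, a change of measure on the environment needing only $\E[Z_1\log^+Z_1]<\infty$), not something obstructed by right-tail control.
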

The proof of the upper-bound of this result  is very different from that of the previous theorem. It is deferred to Section \ref{upth2}. Let us now comment the large deviations results obtained by the two 
previous theorems. \\

\begin{figure}[here,t]
\setlength{\unitlength}{1cm}
\begin{center}
\includegraphics[angle=0,width=0.9\textwidth]{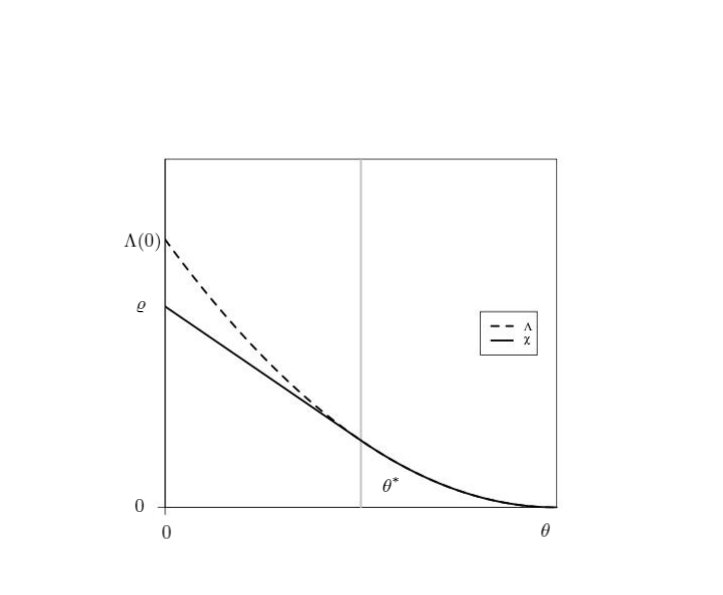}
\end{center}
\caption{\label{ratefunctionlow} $\chi$ and $\Lambda$ in the case $\theta^{\star}>0$.} 
\end{figure}

We note  that $\Lambda$ (and thus $\chi$) is a convex function which is continuous from below and thus has at most one discontinuity. 
If $\varrho< \Lambda(0)$, there is a phase transition of second order 
(i.e. there is a discontinuity of the second derivative of $\chi$). In particular, it occurs if $\Lambda(0)>-\log\mathbb{E}[Q(1)]$ since
we know from \cite{BB11} that $\varrho\leq -\log\mathbb{E}[Q(1)]$. In contrast to the upper deviations \cite{BK09, BB10}, there is no general description of this phase transition. It seems to heavily depend on the fine structure of the offspring distributions. In the linear fractional case, we are able to describe the phase transition more in detail (see forthcoming Corollary \ref{cor1}).\\

We also mention  the following representation of the rate function, whose proof follows exactly  Lemma 4 in \cite{BB10} and is left to the reader. We let $0\leq \theta^*\leq \mathbb{E}[X]$ be such that
\begin{align*}
 \frac{\varrho-\Lambda(\theta^*)}{\theta^*}= \inf_{0\leq \theta \leq \mathbb{E}[X]} \frac{\varrho-\Lambda(\theta)}{\theta}
\end{align*}
 Then, 
\begin{align*}
 \chi(\theta,\varrho,\Lambda)=\left\{\begin{array}{cc}
            \rho\big(1-\tfrac{\theta}{\theta^*}\big)+\tfrac{\theta}{\theta^*} \Lambda(\theta^*)  & \text{ if } \theta<\theta^*\\
      \Lambda(\theta) & \text{ if } \theta\geq \theta^*
             \end{array} \right.\ .
\end{align*}
\medskip\\

 We recall that $\varrho$ is known in the LF case from (\ref{rateLF}), 
and we derive the following result, which is proved in Section \ref{linearfrac}.
\begin{cor}\label{cor1} We assume that $f$ is a.s. linear fractional. Under Assumptions \ref{cramer} \& \ref{as_strongly_supercrit} or 
Assumptions \ref{cramer} \& \ref{finvar}, we have
 for all $\theta \in \big(0,\E[X]\big]$,
$$\lim_{n\rightarrow \infty}\tfrac{1}{n} \log \mathbb{P}_1(1\leq Z_n\leq e^{\theta n})
=\chi(\theta,\varrho,\Lambda)=\min\big\{- \theta-\log \mathbb{E}\big[e^{-X}\big],
\Lambda(\theta)\big\}.$$ 
More explicitly, $\theta^*=\E\big[X\exp(-X)\big]/\E[\exp(-X)]$. \\
If $\theta<\theta^*$, then $\chi(\theta,\varrho,\Lambda)=- \theta-\log \mathbb{E}\big[e^{-X}\big]$, otherwise $\chi(\theta,\varrho,\Lambda)
=\Lambda(\theta)$.
\end{cor}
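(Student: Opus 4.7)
The plan is to invoke Theorem~\ref{thlower0} (under Assumptions~\ref{cramer} \& \ref{as_strongly_supercrit}) or Theorem~\ref{thlower} (under Assumptions~\ref{cramer} \& \ref{finvar}) to reduce the statement to a computation of $\chi(\theta,\varrho,\Lambda)$, and then to make $\chi$ explicit in the linear-fractional case by combining the $\theta^*$-representation recalled just before the corollary with the closed form of $\varrho$ given in~\eqref{rateLF}. Under Assumptions~\ref{cramer} \& \ref{finvar}, Theorem~\ref{thlower} supplies only the $\limsup$; the matching $\liminf$ should come from the standard two-phase strategy (stay bounded during $tn$ generations, then force a Cram\'er deviation of $S_n$ over the remaining $(1-t)n$ steps), which in the LF setting can be made quantitative by exploiting the closed-form iterated composition $f_1\circ\cdots\circ f_n$ and the explicit survival probability.

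The core of the argument is the identification of $\theta^*$. When $\E[Xe^{-X}]\geq 0$, formula \eqref{rateLF} yields $\varrho=-\log\E[e^{-X}]=-\phi(-1)$ with $\phi(\lambda)=\log\E[e^{\lambda X}]$. The minimizer $\theta^*$ of $g(\theta)=(\varrho-\Lambda(\theta))/\theta$ on $[0,\E[X]]$ is characterized by the first-order condition $\Lambda(\theta^*)-\theta^*\Lambda'(\theta^*)=\varrho$. Using the Legendre--Fenchel identities $\Lambda(\theta)=\lambda_\theta\theta-\phi(\lambda_\theta)$, $\Lambda'(\theta)=\lambda_\theta$, and $\theta=\phi'(\lambda_\theta)$, this condition rewrites as $-\phi(\lambda_{\theta^*})=-\phi(-1)$. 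Since $\phi'(-1)=\E[Xe^{-X}]/\E[e^{-X}]\geq 0$, the point $-1$ lies in the interval $[\lambda_{\min},0]$ on which $\lambda\mapsto-\phi(\lambda)$ is strictly monotone, forcing $\lambda_{\theta^*}=-1$, and therefore
$$\theta^* \;=\; \phi'(-1) \;=\; \frac{\E[Xe^{-X}]}{\E[e^{-X}]}.$$

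Substituting $\lambda_{\theta^*}=-1$, so that $\Lambda(\theta^*)=-\theta^*-\phi(-1)$, the first branch of the $\theta^*$-representation collapses for $\theta<\theta^*$ to
$(1-\theta/\theta^*)(-\phi(-1))+(\theta/\theta^*)(-\theta^*-\phi(-1)) = -\theta-\log\E[e^{-X}]$,
while on $[\theta^*,\E[X]]$ the representation gives directly $\chi=\Lambda(\theta)$. The affine map $\theta\mapsto-\theta-\log\E[e^{-X}]$ is precisely the tangent line at $\theta^*$ to the convex graph of $\Lambda$ (it passes through $(\theta^*,\Lambda(\theta^*))$ with slope $\Lambda'(\theta^*)=-1$), and this geometric fact is what repackages the two branches into the $\min$ form displayed in the statement.

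The main obstacle, in my view, is supplying the matching $\liminf$ under Assumptions~\ref{cramer} \& \ref{finvar}, since Theorem~\ref{thlower} only gives the $\limsup$ there; the LF closed form of the iterated probability generating function turns the two-phase heuristic into a direct computation and closes this gap. A secondary case to check is $\E[Xe^{-X}]<0$, in which $\theta^*\leq 0$, the first branch of the representation is vacuous, and $\chi(\theta,\varrho,\Lambda)=\Lambda(\theta)$ throughout $(0,\E[X]]$; this case has to be verified separately and matched with the $\min$ formulation.
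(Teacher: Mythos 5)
Your identification of $\theta^*$ is correct and is essentially the paper's own argument in different clothing: the paper locates $\theta^*$ by noting that $s\mapsto -\theta s-\log\E[e^{-sX}]$ has a critical point at $s=1$ exactly when $\theta=\E[Xe^{-X}]/\E[e^{-X}]$, which is the same statement as your $\lambda_{\theta^*}=-1$ obtained from the first-order condition for $(\varrho-\Lambda(\theta))/\theta$. Your worry about the missing $\liminf$ under Assumptions \ref{cramer} \& \ref{finvar} is legitimate as far as the statement of Theorem \ref{thlower} goes, but it does not call for any LF-specific computation: the general lower bound of Lemma \ref{proplow} applies (its hypothesis on $\varrho_z$ being supplied by Proposition \ref{varrho} (ii)), and the paper's proof of Theorem \ref{thlower} in fact combines Lemmas \ref{proplow} and \ref{upperA3} to conclude with a full limit, not just a $\limsup$. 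The case $\E[Xe^{-X}]<0$ is handled exactly as you suggest ($\varrho=\Lambda(0)$, whence $\chi=\Lambda$ by convexity of $\Lambda$).

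The genuine gap is the last step, where you assert that the tangent-line geometry ``repackages the two branches into the $\min$ form.'' It does the opposite. Since $\Lambda$ is convex and $t(\theta)=-\theta-\log\E[e^{-X}]$ is its tangent at $\theta^*$, one has $t(\theta)\le\Lambda(\theta)$ for \emph{all} $\theta$, so $\min\{t(\theta),\Lambda(\theta)\}=t(\theta)$ everywhere; this agrees with the two-branch representation on $(0,\theta^*)$ but disagrees with $\chi=\Lambda$ on $[\theta^*,\E[X]]$. A sanity check at $\theta=\E[X]$ makes the discrepancy concrete: there $\chi(\E[X],\varrho,\Lambda)=\Lambda(\E[X])=0$, while $t(\E[X])=-\E[X]-\log\E[e^{-X}]<0$ by Jensen's inequality (unless $X$ is degenerate), so the displayed $\min$ is strictly negative and cannot be a rate. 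In other words, the $\min$ identity is not a consequence of what you prove, and the paper's own proof in Section \ref{linearfrac} stops at the tangent/two-branch representation, which is the correct and provable form. You should either derive only that representation, or explicitly flag the $\min$ formula as inconsistent with it, rather than claiming it follows from the tangency.
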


We note that if the offspring-distributions are geometric, Assumption \ref{finvar} is automatically fulfilled (see \cite{BK09}). Moreover, except for the 
degenerated case $\mathbb{P}(Z_1=0)=1$, we have $\mathbb{P}(Z_1=1)>0$ in the linear fractional case.  
Note that the non-lattice assumption made in Assumption \ref{finvar} can be dropped since one can directly proved in the LF case that $\varrho \leq \Lambda(0)$.
Finally, starting from $k\geq 1$ individuals, the result holds if $\varrho$ is replaced by $\varrho_k$, where  $\varrho_k=\varrho$ if $\P_1(Z_1=0)>0$ and  $\varrho_k=-\log(\E(Q(1)^k))$ if $\P_1(Z_1=0)=0$. 

\subsection{Interpretation}

\begin{figure}[here]
\setlength{\unitlength}{1cm}
\begin{center}
\includegraphics[angle=0,width=0.5\textwidth]{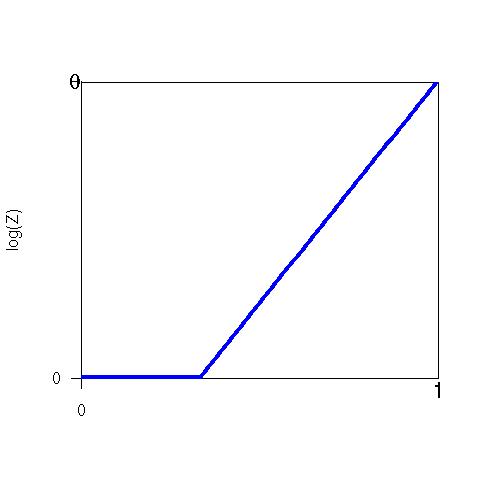}
\end{center}
\caption{\label{pathlow} Most probable path for the event $\{1\leq Z_n\leq e^{\theta n}\}$ with $0<\theta<\theta^\star$.} 
\end{figure}

Let us explain the  rate function  $\chi$ and
describe  the large deviation event $\{1\leq Z_n\leq e^{\theta n}\}$ for some $0<\theta<\mathbb{E}[X]$ and $n$ large. 
This corresponds to observing a population in generation $n$ which is much smaller than expected, but still alive. A possible path that led to this event looks as follows 
(see Figure \ref{pathlow}).\\
 During a first period, until generation $\lfloor tn\rfloor$ ($0\leq t\leq 1$), the population stays small but alive, despite the fact that the process is supercritical.
 The probability of such an event is exponentially small and  of order $\exp(-\varrho\lfloor nt\rfloor+o(n))$. 
Later, the population grows in a supercritical environment but less favorable than the typical one, i.e. $\{S_n-S_{\lfloor nt\rfloor} \leq \theta n\}$. 
This atypical environment sequence has also exponentially small probability, of order $\exp(-\Lambda(\theta/(1-t))\lfloor n(1-t)\rfloor+o(n))$.
The probability of the large deviation event then results from maximizing the product of these two probabilities. \medskip\\

More precisely, we may follow \cite{bansaye08} to  check that 
 the infimum of $\chi$ is reached at a unique
point $t_\theta$ by convexity arguments. Thus
$$\chi(\theta) = t_\theta \varrho + (1-t_\theta) \Lambda(\theta/(1-t_\theta)), \qquad t_\theta \in[0,1-\theta/\E[X]]$$ 
and we can  define the function
$f_\theta : [0,1] \mapsto \R_+$ for each $\theta < \E[X]$ as follows
$$f_\theta(t):= \left\{\begin{array}{ll}  0, \qquad &  \t{if} \ t< t_\theta \\
 \frac{\theta}{1-t_\theta}(t-t_\theta), \quad & \t{if} \ t\geq t_\theta.
\end{array}
\right.
$$
Then, conditionally on $\{1\leq Z_n \leq \exp(n\theta)\}$,  the process  $(\log(Z_{[tn]})/n : t\in [0,1])$ converges in finite dimensional distributions 
to the function  $(f_\theta(t) : t\in[0,1])$.   \\

From the point of view of theoretical ecology, these results shed light on the environmental and demographical stochasticity of the model.
More precisely, randomness in a BPRE comes  both from  the random evolution of the environment (environmental stochasticity) and the random reproduction of each 
individual (demographical stochasticity). Thus a rare event $\{1\leq Z_n \leq \exp(n\theta)\}$ for $n$ large and $\theta <\E[X]$ may be due to a rare sequence of 
environments (less favorable than usual since $Z_n \leq \exp(n\theta)$, but not bad enough to provoke extinction) and/or to unsual reproductions of individuals. 
Our results show that it is a non-trivial combination of both.  \\
In a first period $[0,t_{\theta}]$, the population just survives thanks to a combination of environmental and demographical stochasticity 
(we call this period \textsl{survival period}). 
If $\P(Z_1=0)=0$, we know that the population remains constant.  Thus the typical environment $f$ is biased by $\P_1(Z_1=1 \vert f)=f'(0)$ and the number of offspring is 
forced to be $1$ for (almost) all individuals. If $\P(Z_1=0)>0$ and $\varrho<\Lambda(0)$, e.g. in the LF case, 
again it is a combination of the demographical and environmental stochasticity. If $\P(Z_1=0)>0$ and $\varrho=\Lambda(0),$ the time of the survival period is reduced to $0$ : $t_{\theta}=0$. \\
In a second period $[t_{\theta},1]$, the population grows exponentially but at a lesser rate than usual. This is only due to the environmental stochasticity : the typical environment $f$ is not biased by the
 mean offspring number $f'(1)$.

\subsection{Application to Kimmel's model : cell division with parasite infection}
As an illustration and a motivation we deal with  the following
branching model for cell division with parasite infection. It is described and studied in  \cite{Ban08,Ban10}.
In each generation, the cells give birth to two daughter cells and
the cell population is the binary tree. The model takes into 
account unequal sharing of parasites in the two daughter cells, following experiments made
in Tamara's Laboratory in Hopital Necker (Paris). 

More explicitly, we assume that the parasites reproduce following a Galton-Watson process with reproduction law $(p_k : k\geq 0)$.
We consider a random variable $P\in (0,1)$ a.s. and, for convenience, we assume that its distribution is symmetric with respect to $1/2$ : $P \stackrel{d}{=} 1-P$.
This random parameter gives the binomial repartition of the parasites in each daughter cell. It is picked in an i.i.d manner for each cell. 
Thus, conditionally on the fact that the cells contain $k$ parasites when it divides and  conditionally on this parameter being equal to $p$, the number of parasites inherited by the first daughter cell follows a binomial distribution with parameters $(k,p)$, whereas the other parasites go in the other daughter cell. In other words, each parasite is picked independently into the first daughter cell with probability $p$.

The number of cells in generation $n$ is $2^n$.
Then, a simple computation proves that the number of cells $N_n[a,b]$ in generation $n$ whose number of parasites is between $a$ and $b$ satisfies
$$\E\big[N_n[a,b]\big]=2^n\P(Z_n \in [a,b]),$$
where $Z_n$ is a BPRE whose environment is given by the random variable (r.v.) $P$ : 
$$\P_1( Z_1 = i   \ \vert \ P=p) = \sum_{k\geq i}^{\infty} p_k p^i(1-p)^{k-i}.$$
As a consequence of the previous Theorems, we can   derive the mean behavior of the number of cells infected by a positive number of parasites which is smaller than usual:
$$\frac{1}{n} \log \E\big[N_n[1,\exp(n\theta)]\big]=\log(2) -\chi(\theta, \varrho, \Lambda) \qquad \theta <\E(X),$$
where $\Lambda$ is the Fenchel Legendre transform of the r.v. 
$$X:=\log (\sum_{k\geq 0} kp_k) +\log(P)$$ 
and $\varrho$ is inherited  from Proposition 2.1. $(i)$ when $p_0>0$.
In particular, let us assume that $(p_k)_{k\geq 0}$ is a linear fractional offspring distribution, i.e. there exist
$a \in [0,1]$ and $q\in [0,1)$ such that
$$p_0=a, \qquad p_k= (1-a)(1-q)q^k (k\geq 1).$$
 Then 
$$\P_1( Z_1 = i   \ \vert \ P=p) = \sum_{k\geq i}^{\infty} a  q^k p^i(1-p)^{k-i}= \frac{a}{1-(1-p)q} (q  p)^i,$$
i.e. the offspring distribution for the branching process $Z$ is also a.s. linear fractional. Thus we can apply Corollary  \ref{cor1} and $\varrho$ can be 
calculated explicitly from the distribution of $P$. 
Furthermore, solving $\chi(\theta, \varrho, \Lambda)>\log 2$ yields the set of $\theta$ such that  we  observe cells infected by a positive number but less than $\exp(n\theta)$ parasites 
(for large times).

\section{Proof of lower large deviations}\label{section7}
First, we focus on the lower bound, which is easier and can be made under general assumptions (satisfied in both Theorems  \ref{thlower0} and \ref{thlower}).
We split then the proof of the upper bounds in two parts, working with Assumption \ref{as_strongly_supercrit}  in the first one, and then with $\P(X<0)>0$ and Assumption \ref{finvar} in the second. 
Finally, we prove the theorems combining these results.

\subsection{Proof of the lower bound for Theorems  \ref{thlower0} and \ref{thlower} }
\label{proofoflower}
First we note that, if  the associated random walk has exceptional values, the same is true for the branching process $Z$. 
The estimation $Z_n\approx \E[Z_n \ \vert  \mathcal{E}]=\exp(S_n)$ a.s. gives a lower bound in the following way.
 If  $\E[Z_1\log^+(Z_1)]<\infty$, we know from \cite{athreya71} that the limit of the martingale $Z_n\exp(-S_n)$ is non-degenerated. Then a direct generalization 
of \cite[Proposition 1]{bansaye08}
ensures that
\begin{eqnarray}
 \liminf_{n\rightarrow\infty} \tfrac{1}{n} \log \mathbb{P}_j(1\leq Z_n \leq e^{\theta n}| S_n\leq (\theta+\varepsilon) n) &=& 0, \nonumber 
\end{eqnarray}
for all $j\geq 1$ and  $\varepsilon>0$. It relies on the same change of measure as in the proof of \cite[Proposition 1]{bansaye08}:
$$\w{\P} ( Q  \in  \d q) :=  \frac{ m(q)^{\lambda_c}}{\E\big[ m(Q)^{\lambda_c}\big]} \P(Q \in \d p),$$
where $\lambda_c$ is the argmax of $\lambda \rightarrow \lambda c- \varphi(\lambda)$:
$$\sup_{\lambda\leq 0}\{\lambda c-\varphi(\lambda)\}=\lambda_c c- \varphi(\lambda_c).$$ 
As $\Lambda$ is non-increasing, continuous from below and convex and thus a right-continuous function,  $\Lambda(\theta+\varepsilon)\rightarrow\Lambda(\theta)$ as $\varepsilon\rightarrow 0$. Then, for every $0<\theta<\mathbb{E}[X]$ such that $\Lambda(\theta)<\infty$, we have
\begin{eqnarray}\label{lobo}
 \liminf_{n\rightarrow\infty} \tfrac{1}{n} \log \mathbb{P}_j(1\leq Z_n \leq e^{\theta n}| S_n\leq \theta n)=0. 
\end{eqnarray}
Now we can prove the following  result

\begin{lemma} \label{proplow} Let $z\geq 1$. We assume that  $\E[Z_1\log^+(Z_1)]<\infty$ and  that 
$$\varrho_z=-\lim_{n \rightarrow \infty} \tfrac{1}{n} \log\P_z(1\leq Z_{n}\leq b)$$ 
exists and does not depend on $b$ large enough.
Then   for every $\theta \in \big(0, \mathbb{E}[X]\big]$, we have
$$ \liminf_{n\rightarrow\infty} \tfrac{1}{n} \log \mathbb{P}_z(1\leq Z_n\leq e^{\theta n}) \geq -\chi(\theta, \varrho_z,\Lambda). $$
\end{lemma}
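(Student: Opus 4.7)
The plan is to build the lower bound by exhibiting the two-phase trajectory sketched in Figure~\ref{pathlow}. I fix $\theta\in(0,\E[X]]$, a parameter $t\in(0,1-\theta/\E[X])$ to be optimized at the end, and $b$ large enough that $\tfrac{1}{n}\log\P_z(1\leq Z_n\leq b)\to-\varrho_z$. The Markov property applied at generation $\lfloor tn\rfloor$ will give
\begin{equation*}
\P_z(1\leq Z_n\leq e^{\theta n})\geq\sum_{k=1}^{b}\P_z\big(Z_{\lfloor tn\rfloor}=k\big)\,\P_k\big(1\leq Z_{n-\lfloor tn\rfloor}\leq e^{\theta n}\big),
\end{equation*}
the first factor representing the \emph{survival phase} in which $Z$ remains in $[1,b]$, and the second the \emph{growth phase} recovering from a bounded state to size at most $e^{\theta n}$.

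For the survival phase I would use the hypothesis on $\varrho_z$ together with a pigeonhole argument to produce an index $k(n)\in\{1,\dots,b\}$ with $\P_z(Z_{\lfloor tn\rfloor}=k(n))\geq\tfrac{1}{b}\exp(-(\varrho_z+\varepsilon)\lfloor tn\rfloor)$ for $n$ large. For the growth phase, combining (\ref{lobo}) with (\ref{ldS2}) yields, for each fixed $j\geq 1$ and each fixed $\theta''\in(0,\E[X])$,
\begin{equation*}
\liminf_{m\to\infty}\tfrac{1}{m}\log\P_j\big(1\leq Z_m\leq e^{\theta'' m}\big)\geq-\Lambda(\theta'').
\end{equation*}
Setting $m=n-\lfloor tn\rfloor$ and taking any $\theta''$ strictly less than $\theta/(1-t)$, the monotonicity of the event in the threshold yields $\P_{k(n)}(1\leq Z_m\leq e^{\theta n})\geq\min_{1\leq k\leq b}\P_k(1\leq Z_m\leq e^{\theta'' m})$, and the minimum over the finite set $\{1,\dots,b\}$ preserves the exponential rate $-\Lambda(\theta'')$.

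Combining both bounds, sending $\theta''\uparrow\theta/(1-t)$ (using continuity of $\Lambda$ at interior points of its domain) and $\varepsilon\downarrow 0$, I obtain $\liminf_n\tfrac{1}{n}\log\P_z(1\leq Z_n\leq e^{\theta n})\geq-t\varrho_z-(1-t)\Lambda(\theta/(1-t))$ for every admissible $t$, and infimizing over $t\in(0,1-\theta/\E[X])$ produces $-\chi(\theta,\varrho_z,\Lambda)$. The boundary cases are dispatched separately: $t=0$ is (\ref{lobo}) directly; $t=1$ reduces to the trivial inclusion $\{Z_n\leq b\}\subset\{Z_n\leq e^{\theta n}\}$; for $t\in[1-\theta/\E[X],1)$ the inequality $\theta/(1-t)\geq\E[X]$ forces $\P(S_m\leq(\theta/(1-t))m)$ to stay bounded away from zero and non-extinction from $k\geq 1$ individuals has positive probability, so the growth-phase rate is $0$. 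The only delicate bookkeeping point is the dependence of the pigeonhole index $k(n)$ on $n$, which I sidestep by minimizing the growth-phase probability over the finite range $\{1,\dots,b\}$ rather than extracting a subsequence; a minor nuisance is the small mismatch between $\theta n/(n-\lfloor tn\rfloor)$ and $\theta/(1-t)$, absorbed by the continuity of $\Lambda$.
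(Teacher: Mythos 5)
Your proof is correct and follows essentially the same route as the paper's: decompose at generation $\lfloor tn\rfloor$, bound the survival phase via the definition of $\varrho_z$, bound the growth phase by combining (\ref{lobo}) with the large deviation estimate (\ref{ldS2}) for $S$, optimize over $t$, and treat $t=1$ through the inclusion $\{1\leq Z_n\leq b\}\subset\{1\leq Z_n\leq e^{\theta n}\}$. The pigeonhole device (discarded in favour of the minimum over $k\in\{1,\dots,b\}$) and the $\theta''\uparrow\theta/(1-t)$ approximation are only cosmetic variants of the paper's $\min_{1\leq k\leq b}$ factorization and $\varepsilon$-bookkeeping.
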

\begin{proof}
 We  decompose the probability following a time $t\in [0,1)$ when the process goes beyond $b$. Using the large deviations principle  satisfied by the random walk $S$, 
we have for every $\varepsilon>0$ and $n$ large enough
\begin{align}
 & \mathbb{P}_z(1\leq Z_n\leq e^{\theta n}) \nonumber \\ 
& \geq \mathbb{P}_z(1\leq Z_{\lfloor t n\rfloor}\leq b)  
\min_{1\leq k \leq b} \mathbb{P}_k(1\leq Z_{\lfloor (1-t)n\rfloor} \leq e^{\theta n}; S_{\lfloor (1-t)n\rfloor }\leq e^{\theta n})\nonumber \\
& \geq  \mathbb{P}_z(1\leq Z_{\lfloor t n\rfloor}\leq b) e^{-\Lambda\left(\tfrac \theta{1-t}\right) n(1-t)+\varepsilon n} \min_{1\leq k \leq b} \mathbb{P}_k\Big(1\leq Z_{\lfloor(1- t)n\rfloor} 
\leq  e^{\tfrac \theta{1-t} n(1-t)}\Big|S_n \leq e^{\tfrac \theta{1- t} n(1-t)}\Big). \nonumber
\end{align}
 Note that the above inequality is trivially fulfilled if $\Lambda(\theta)=\infty$. The definition of $\varrho_z$ and  (\ref{lobo}) yield with $b$ large enough 
and for every $\varepsilon>0$
$$ \liminf_{n\rightarrow\infty} \tfrac{1}{n} \log \mathbb{P}_z(1\leq Z_n\leq e^{\theta n}) 
\geq -\inf_{t\in [0,1)} \big\{ t\varrho_z+(1-t)\Lambda\big(\theta/(1-t)\big)+\varepsilon\big\}.$$
Adding that 
$\mathbb{P}_z(1\leq Z_n\leq e^{\theta n}) \geq \P_z(1\leq Z_{n}\leq b)$ for $n$ large enough, we can take the latter infimum of $[0,1]$, 
again with the convention $0\cdot\infty=0$. 
Taking the limit $\varepsilon\rightarrow 0$ yields the expected lower bound  $-\chi(\theta, \varrho_z,\Lambda)$.
\end{proof}

\subsection{Proof of the upper bound for Theorem  \ref{thlower0} (i) and (ii)}
The next lemma ensures that a large population typically grows as its expectation and thus follows the random walk of the environment $S$. The start of the proof of this proposition is in the same vein as \cite{bansaye08}, but the situation is much more involved since $\P_1(Z_1 =0)$ may be positive,  $f'(1)$ may not be bounded a.s. and the variance of the reproduction laws may be infinite with positive probability. 
\begin{lemma} \label{prop_help} 
Under Assumption \ref{as_strongly_supercrit}, for every $\varepsilon>0$ and for every $a>0$, there exist constants $c,b\geq 1$ such that for every $n\in\N$
$$\sup_{z\geq b}\P_z(Z_n\leq e^{S_n-n\varepsilon} ; Z_1 \geq b, ..., Z_n\geq b)
\leq c\ e^{-an}.$$
\end{lemma}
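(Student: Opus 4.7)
My plan is to apply Markov's inequality to a negative moment of the intrinsic (super)martingale $W_n:=Z_n e^{-S_n}$ and then iterate. For any $\lambda>0$, writing $\mathcal A_n:=\{Z_1\geq b,\dots,Z_n\geq b\}$ and noting that $\{Z_n\leq e^{S_n-n\varepsilon}\}=\{W_n^{-\lambda}\geq e^{\lambda n\varepsilon}\}$,
\[
\P_z(Z_n\leq e^{S_n-n\varepsilon};\mathcal A_n) \leq e^{-\lambda n\varepsilon}\,\E_z\!\left[W_n^{-\lambda}\mathbf 1_{\mathcal A_n}\right].
\]
Using $e^{\lambda S_n}=e^{\lambda S_{n-1}}m_{Q_n}^{\lambda}$ and integrating out $Q_n$, which is independent of $\mathcal F_{n-1}$ and of $Z_0,\dots,Z_{n-1}$,
\[
\E_z\!\left[W_n^{-\lambda}\mathbf 1_{\mathcal A_n}\right] = \E_z\!\left[W_{n-1}^{-\lambda}\mathbf 1_{\mathcal A_{n-1}}\,Z_{n-1}^{\lambda}\,\Phi(Z_{n-1})\right], \qquad \Phi(k):=\E\!\left[\E[Z_1^{-\lambda}m_Q^\lambda\mathbf 1_{Z_1\geq b}\mid Q,Z_0=k]\right].
\]
Hence if, for arbitrary $\eta>0$, the threshold $b$ can be chosen so that the one-step estimate $\sup_{k\geq b}k^{\lambda}\Phi(k)\leq 1+\eta$ holds, induction on $n$ yields $\E_z[W_n^{-\lambda}\mathbf 1_{\mathcal A_n}]\leq z^{-\lambda}(1+\eta)^n$, and choosing $\lambda$ with $\lambda\varepsilon>a+\log(1+\eta)$ yields the desired bound $ce^{-an}$ with $c=b^{-\lambda}$.

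\textbf{Step 2 (Survivor decomposition for the one-step estimate).} Given $Q=q$ and $Z_0=k$, I will write $Z_1=\sum_{j=1}^{\tilde Z_1}Y'_j$, where $\tilde Z_1\sim\mathrm{Bin}(k,p_q)$ with $p_q:=1-q(0)$ counts the individuals leaving at least one offspring, and the $Y'_j\geq 1$ are i.i.d.\ of distribution $q(\cdot)/p_q$ on $\{1,2,\dots\}$, of conditional mean $m_Q/p_Q$. I split the integration in $\Phi(k)$ according to a threshold $M$ on $m_Q/p_Q$. On the atypical environments $\{m_Q/p_Q>M\}$, I use the crude bound $Z_1\geq\tilde Z_1$ together with the elementary estimate $\E[\tilde Z_1^{-\lambda}\mathbf 1_{\tilde Z_1\geq 1}\mid Q=q]\leq C_\lambda(kp_q)^{-\lambda}$ to obtain $m_q^\lambda\E[Z_1^{-\lambda}\mathbf 1_{Z_1\geq b}\mid Q=q]\leq C_\lambda(m_q/p_q)^\lambda k^{-\lambda}$, and invoke Assumption~\ref{as_strongly_supercrit} via dominated convergence to make $\E[(m_Q/p_Q)^\lambda\mathbf 1_{m_Q/p_Q>M}]\leq \eta/2$ for $M$ large. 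On the typical environments $\{m_Q/p_Q\leq M\}$, a Chernoff bound for $\tilde Z_1$ gives $\tilde Z_1\geq kp_Q(1-\delta)$ with exponentially high probability, after which a weak-law-of-large-numbers estimate for $\sum_{j\leq \tilde Z_1}Y'_j$ yields $Z_1\geq km_Q(1-\delta)^2$ on a good event $\mathcal G_Q$; on $\mathcal G_Q$, $m_Q^\lambda Z_1^{-\lambda}\leq(1-\delta)^{-2\lambda}k^{-\lambda}$, while on $\mathcal G_Q^c$ the uniform bound $m_Q\leq M$ combined with $Z_1\geq\tilde Z_1$ produces only a negligible contribution once $\P(\mathcal G_Q^c)$ is small enough. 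Picking first $M$ large, then $\delta$ small so that $(1-\delta)^{-2\lambda}\leq 1+\eta/4$, and finally $b$ large establishes the one-step bound.

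\textbf{Main obstacle.} The delicate point is the typical-environment part of the one-step estimate. Since Assumption~\ref{as_strongly_supercrit} controls only $f'(1)/(1-f(0))$, the variance of the $Y'_j$ may be infinite, so Chebyshev is unavailable and the weak law of large numbers for $\sum_j Y'_j$ is a priori only qualitative. To kill the $M^\lambda$ factor on $\mathcal G_Q^c$ uniformly over $\{m_Q/p_Q\leq M\}$, one has to extract a quantitative rate from the $L\log L$ integrability $\E[Z_1\log^+Z_1/f_1'(1)]<\infty$; I expect this to be done through a truncation of the $Y'_j$ at some level $T$, handling the bounded part by a Bennett/Bernstein-type inequality and the heavy-tail part through a Markov bound under the $L\log L$ moment. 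The remaining ingredients—the Binomial Chernoff bound for $\tilde Z_1$, the integration of the atypical environments under Assumption~\ref{as_strongly_supercrit}, and the inductive iteration of the one-step estimate—are by comparison routine.
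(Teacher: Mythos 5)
Your Step 1 is essentially the paper's argument: Markov's inequality applied to a negative power of $Z_ne^{-S_n}$, followed by conditioning generation by generation, so that everything reduces to a per-generation bound, uniform in the current population size $k\geq b$, on $\E\big[(e^{\varepsilon}Z_1/(km_Q))^{-\lambda};Z_1\geq b\mid Z_0=k\big]$ (the paper's $M_\lambda(b,f)$). The gap is in Step 2, and it is exactly where you flagged it: on the typical set $\{m_Q/p_Q\leq M\}$ you need $M^\lambda\,\P(\mathcal G_Q^c)$ to be small \emph{uniformly} in $q$ and in $k\geq b$, and this fails twice over. First, the Chernoff bound for the survivor count gives $\P(\tilde Z_1<kp_q(1-\delta))\leq e^{-ckp_q\delta^2}$, but $kp_q$ is not bounded below on $\{m_q/p_q\leq M,\ k\geq b\}$ (only the ratio $m_q/p_q$ is controlled, not $p_q$ itself; take $q(0)=1-\epsilon$, $q(1)=\epsilon$ with $\epsilon$ tiny), so this term is not uniformly small and the fallback bound $Z_1\geq \tilde Z_1\geq kp_q(1-\delta)$ is not even available. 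Second, the $L\log L$ hypothesis $\E[Z_1\log^+Z_1/f_1'(1)]<\infty$ is a single integral over the environment; it yields no uniform-in-$q$ tail control for $Y_1'$ on the typical set, so the truncation/Bernstein/Markov scheme cannot deliver a rate for $\P\big(\sum_{j\leq\tilde Z_1}Y_j'<km_q(1-\delta)^2\big)$ that is uniform over $\{m_q/p_q\leq M\}$. As written, the claim that $\mathcal G_Q^c$ contributes negligibly is unjustified, and I do not see how to complete it along the lines you sketch.

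The obstacle is an artifact of the route you chose, and the paper avoids it entirely. Since one is bounding a \emph{negative} moment, only the lower tail of $Z_1$ matters, and that lower tail is already controlled by the binomial survivor count alone via $Z_1\geq\tilde Z_1$: no concentration of $\sum_jY_j'$ around $km_q$ is used anywhere. Concretely, for each fixed environment $g$ one establishes (i) $\E\big[(e^{\varepsilon}Z_1/(kg'(1)))^{-\lambda};Z_1>0\mid Z_0=k\big]\to e^{-\lambda\varepsilon}$ as $k\to\infty$, using the law of large numbers only qualitatively together with the binomial lower-tail bound to truncate the large values of the integrand; and (ii) the explicit $k$-uniform bound $M_\lambda(1,g)\leq C(\varepsilon,\lambda)\big(g'(1)/(1-g(0))\big)^\lambda$, again from the binomial lower tail alone --- the term $\big(2e^{-\varepsilon}g'(1)/(1-g(0))\big)^\lambda$ absorbs precisely the regime where $k(1-g(0))$ is not large. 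The uniformity over environments is then obtained not by a uniform quantitative estimate but by dominated convergence, the dominating function in (ii) being integrable by Assumption \ref{as_strongly_supercrit}. If you replace your typical/atypical splitting by this domination argument, your Step 1 goes through unchanged and the proof closes, moreover without any appeal to the $L\log L$ condition, which the lemma does not assume.
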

\begin{proof}
Let us introduce the ratio of the successive sizes of the population
\begin{align*}
R_i:=Z_{i}/Z_{i-1}, \qquad i\in \{1,\ldots,n\}.
\end{align*}
Recalling that $\log f'_i(1)=X_i$, we can rewrite 
\begin{align*}
\frac{e^{S_n-n\varepsilon}}{Z_n} = Z^{-1}_0 \prod_{i=1}^n \frac{f'_i(1) }{e^{\varepsilon}R_i}.
\end{align*}
Then for every $\lambda>0$, we can use the classical Markov inequality  $\mathbb{P}(Y\geq 1) \leq \mathbb{E}[Y^\lambda]$ for any nonnegative random variable $Y$ and get for every $z\geq b$ 
 \begin{align*}
\P_{z}(Z_n\leq e^{S_n-n\varepsilon} &;  Z_1 \geq b, ..., Z_n\geq b)\\
& \leq b^{-\lambda} \E\Big[\prod_{i=1}^n (f_i'(1)/(e^{\varepsilon}R_i))^{\lambda} ;  Z_1 \geq b, ..., Z_n\geq b \Big]\nonumber \\
& = b^{-\lambda} \E\Big[\prod_{i=1}^n (e^{\varepsilon}R_i/f_i'(1))^{-\lambda} ;  Z_1 \geq b, ..., Z_n\geq b \Big]\ .
\end{align*}
Now we introduce the following random variable 
$$M_{\lambda}(b,g):=\sup_{k \geq b} \E\Big[ \Big(e^{\varepsilon}\frac{\sum_{i=1}^k N_i^g}{k g'(1)}\Big)^{-\lambda};\sum_{i=1}^k N_i^g>0\Big]$$
where  $N^g_i$ are i.i.d., integer valued random variables with (fixed) p.g.f. $g$. By the branching property, we may write a.s.
\begin{align*}
M_{\lambda}(b,f_i)&=\sup_{k \geq b} \E\Big[ \Big(e^{\varepsilon}\tfrac{Z_{i+1}}{Z_i f_i'(1)}\Big)^{-\lambda}  ; Z_{i+1} > 0 \ \Big| \ f_i ,Z_i=k  \Big]\\
&=\sup_{k \geq b} \E\Big[ \Big(e^{\varepsilon}\tfrac{R_{i+1}}{f_i'(1)}\Big)^{-\lambda}  ; Z_{i+1} > 0 \ \Big| \ f_i ,Z_i=k  \Big]. 
\end{align*}
hen, by conditioning on the successive sizes of the population, we obtain
\begin{align*}
\P_{b}(Z_n&\leq e^{S_n-n\varepsilon} ;  Z_1 \geq b, ..., Z_n\geq b)\\
& \leq b^{-\lambda}\E\bigg[\prod_{i=1}^{n-1} \Big(\tfrac{e^{\varepsilon}R_i}{f_i'(1)}\Big)^{-\lambda}  \E\Big[ \Big(\tfrac{e^{\varepsilon}R_n}{f_n'(1)}\Big)^{-\lambda} ; Z_n\geq b  \ \vert \ f_{n} , Z_{n-1} \Big] ;  Z_1 \geq b, ..., Z_{n-1}\geq b\bigg] \\
& \leq b^{-\lambda} \E\Big[\prod_{i=1}^n M_{\lambda}(b,f_i)\Big] \\
& = b^{-\lambda}\E\big[ M_{\lambda}(b,f)\big]^n.
\end{align*}
$\qquad$ We now want to prove that for every $\alpha \in (0,1)$, there exist $\lambda, b>0$ such that $\E[ M_{\lambda}(b,f)] \leq \alpha$. 
Let $g$ be fixed and deterministic. The idea is that for every $g$, $\sum_{i=1}^k N_i^g/k \rightarrow  g'(1)$ a.s. as $k\rightarrow \infty$ by the law of large numbers. We will be able to derive that 
$$\E\Big[ \Big(e^{\varepsilon}\frac{\sum_{i=1}^k N_i^g}{kg'(1)}\Big)^{-\lambda} ; \sum_{i=1}^k N_i^g>0\Big]\rightarrow e^{-\lambda \varepsilon},$$
as $k\rightarrow \infty$ and   $M_{\lambda}(b,f)\rightarrow e^{-\lambda \varepsilon}$ a.s. as $b$ goes to infinity. Under suitable conditions, we  are then able to prove that  $\E [M_{\lambda}(b,f)]\rightarrow e^{-\lambda \varepsilon}$. Finally, considering $\lambda>0$ such that $e^{-\lambda \varepsilon} < e^{-a}$ and  $b$ large enough gives us the result.  \medskip\\
Let us now present the details of the proof. First we fix a p.g.f. $g$ such that $g(0)<1$ and $E[N_1^g]=g'(1)<\infty$. Then the law of large numbers ensures
$$Y_k:=\Big(e^{\varepsilon}\frac{\sum_{i=1}^k N_i^g}{k g'(1)}\Big)^{-\lambda} \stackrel{k\rightarrow \infty}{\longrightarrow} e^{-\lambda \varepsilon} \qquad \P \ \text{-- a.s. }$$
Moreover $\sum_{i=1}^k N_i^g$ is stochastically larger than a random variable $B(k,g)$ with binomial distribution of parameters $(k,1-g(0))$. 
Applying the classical large deviations upper bound for Bernoulli random variables (see e.g. \cite{dembo, hollander}) yields for $x\geq 0$
\begin{align*}
\P\Big(Y_k \geq x ; \sum_{i=1}^k N_i^g>0 \Big)&\leq  \P\big(B(k,g)\leq k\  x^{-1/\lambda} g'(1)e^{-\varepsilon}\big) \leq \exp\big(-k\ \psi_g(x^{-1/\lambda} g'(1)e^{-\varepsilon})\big), 
\end{align*} where the 
function $\psi_g(z)$ is zero if $z\geq 1-g(0)$ and 
 positive for $0\leq z< 1-g(0)$.  It is specified by  the Fenchel Legendre transform of a Bernoulli distribution, i.e. for $0\leq z\leq 1-g(0)$,
$$\psi_g(z) = z\log\big(\tfrac{z}{1-g(0)}\big)+(1-z)\log\big(\tfrac{1-z}{g(0)}\big).$$
Moreover $\{ \sum_{i=1}^k N_i^g>0 \} \subset \{Y_k \leq k^{\lambda}d\}$ with $d=(g'(1)e^{-\varepsilon})^{\lambda}$. Thus
$$\E\Big[Y_k \ind_{Y_k\geq x} ; \sum_{i=1}^k N_i^g>0 \Big]\leq d k^{\lambda} \P\Big(Y_k \geq x ;\sum_{i=1}^k N_i^g>0\Big)  \leq dk^{\lambda}\exp\big(-k\psi_g(x^{-1/\lambda} g'(1)e^{-\varepsilon})\big) .$$ 
Let us choose $x$ large enough such that $\psi_g(x^{-1/\lambda} g'(1)e^{-\varepsilon})>0$. Then letting $k\rightarrow\infty$, the right-hand side of the above 
equation converges to 0.  Moreover, we can apply the bounded convergence theorem to $Y_k\ind_{Y_k\leq x, \sum_{i=1}^k N_i^g>0}$ to get
$$ \limsup_{k\rightarrow \infty} \E\Big[ Y_k ;  \sum_{i=1}^k N_i^g>0 \Big] 
= \E\Big[ \limsup_{k\rightarrow \infty}\big( Y_k\ind_{Y_k\leq x, \sum_{i=1}^k N_i^g>0}\big)\Big]\leq e^{-\lambda \varepsilon}.$$
Recalling that  $M_{\lambda}(b,g)$ decreases with respect to $b$, we get for every $g$
$$ \lim_{b\rightarrow \infty}  M_{\lambda}(b,g) \leq  e^{-\lambda \varepsilon}\ .$$

$\qquad$ Second, we apply the bounded convergence theorem again and finish the proof by integrating the previous result with respect to the environment. To check  that
$$\E[M_{\lambda}(1,f)]<\infty,$$
we define for any p.g.f. $g$ with  $g(0)<1$  and $g'(1)<\infty$ the real numbers
$$x_g:=\left(e^{-\varepsilon}\frac{2g'(1)}{1-g(0)} \right)^{\lambda}, \qquad    y_g:=(ke^{-\varepsilon}g'(1))^{\lambda}.$$
For $k$ large enough, we have $x_g<y_g$. We also note that $x\geq x_g$ implies  that  $x^{-1/\lambda} g'(1)e^{-\varepsilon}\leq (1-g(0))/2$. Moreover, $\sum_{i=1}^k N_i^g>0$ implies $Y_k\leq y_g$, and thus   
\begin{align*}
\E\Big[Y_k   ; \sum_{i=1}^k N_i^g>0\Big] &= \int_0^{y_g} \P\Big( Y_k\geq x ; \sum_{i=1}^k N_i^g>0 \Big) dx \\
& \leq x_g+ \int_{x_g}^{dk^{\lambda}} \exp\big(-k\psi_g(x^{-1/\lambda} g'(1)e^{-\varepsilon})\big) dx  \\
& \leq x_g+ dk^{\lambda} \exp\big(-k\psi_g\big(\tfrac{1-g(0)}{2}\big)\big).
\end{align*}
Now we maximize the right-hand side with respect to $k\geq 1$. 
Using that for all $\alpha>0, x\geq 0$,  $x^{\lambda}e^{-\alpha x}\leq (\lambda/\alpha)^{\lambda}e^{-\lambda}$ and recalling that $d=(g'(1)e^{-\varepsilon})^{\lambda}$, we get 
\begin{align}
M_{\lambda}(1,g)= \sup_{k\geq 1}\E\Big[Y_k   ; \sum_{i=1}^k N_i^g>0\Big] \leq x_g+(e^{-\varepsilon}g'(1))^{\lambda}\lambda^{\lambda}e^{-\lambda}\psi_g\big(\tfrac{1-g(0)}{2}\big)^{-\lambda}.\label{sup1}
\end{align}
Finally, we observe that $\psi_g(z)$ is a nonnegative convex function which reaches $0$ in $1-g(0)$. Thus $0\leq x\leq y  \leq 1-g(0)$ implies
$\psi_g(x) \geq (x-y)\psi_g'(y)$ and in particular
$$\psi_g\big( \tfrac{1-g(0)}{2}\big) \geq - \tfrac{1-g(0)}{4}\psi_g'\big(\tfrac{1-g(0)}{4}\big).$$
As $\psi_g'(z)=\log(\tfrac{zg(0)}{(1-z)(1-g(0))}\big)$ and $\log(1-x)\leq x$ for $x>0$,
we get that
\begin{align}
\psi_g\big( \tfrac{1-g(0)}{2}\big) \geq -\tfrac{1-g(0)}{4} \log\big(1-\tfrac{3}{3+g(0)}\big)\geq  \frac 34 \frac{1-g(0)}{3+g(0)}\geq \frac {3(1-g(0))}{16}. \label{sup2}
\end{align}
Combining the inequalities (\ref{sup1}) and (\ref{sup2}) yields
$$M_{\lambda}(1,f) \leq a(\varepsilon,\lambda) \left( \frac{f'(1)}{1-f(0)} \right)^{\lambda} \qquad \text{a.s.},$$
where $a(\varepsilon,\lambda)$ is a finite positive constant, only depending on $\varepsilon$ and $\lambda$.
Thus Assumption \ref{as_strongly_supercrit}  ensures that $\E[M_{\lambda}(1,f)]<\infty$. Adding that $M_{\lambda}(b,f)\leq M_{\lambda}(1,f)$ a.s. for $b\geq 1$, we apply the bounded convergence theorem to obtain 
\begin{align*}
\lim_{b\rightarrow\infty}  \mathbb{E}\big[M_{\lambda}(b,f)\big] = \mathbb{E}\big[\lim_{b\rightarrow\infty}  M_{\lambda}(b,f)\big] \leq e^{-\lambda \varepsilon}.
\end{align*}
Then,  choosing $b$ large enough,
\begin{align*}
  \mathbb{E}\big[  M_{\lambda}(b,f)\big] \leq 2e^{-\lambda \varepsilon}. 
\end{align*}
Letting $\lambda$ such that $2e^{-\lambda \varepsilon}\leq e^{-a}$  ends up the proof.  
\end{proof}

\begin{lemma} \label{propupp} Let $z\geq 1$ and  assume that 
$$\varrho_z=-\lim_{n \rightarrow \infty} \tfrac{1}{n} \log\P_z(1\leq Z_{n}\leq b)$$ 
exists and does not depend on $b$ large enough. Then, under Assumption \ref{as_strongly_supercrit}, for every $\theta \in \big(0,\E[X]\big]$,
$$\limsup_{n \rightarrow \infty} \tfrac{1}{n} \log \P_z\big(1\leq Z_{n}\leq \exp(n\theta)\big)\leq -\chi(\theta, \varrho_z, \Lambda).$$ 
\end{lemma}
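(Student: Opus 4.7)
The plan is to decompose the event according to the last generation at which the process is below a large threshold. Fix $b \geq z$ to be tuned later and set
\[ \tau := \max\{k \in \{0,\ldots,n\} : Z_k \leq b\}, \]
so that $\tau \in \{0,\ldots,n\}$ a.s.\ under $\P_z$. I split
\[ \P_z(1 \leq Z_n \leq e^{\theta n}) = \sum_{k=0}^n \P_z\bigl(\tau = k,\ 1 \leq Z_n \leq e^{\theta n}\bigr). \]
Conditioning on the environment and using that $(f_1,\ldots,f_k)$ and $(f_{k+1},f_{k+2},\ldots)$ are independent, the branching property at time $k$ gives, for each $k$,
\[ \P_z\bigl(\tau = k,\ 1 \leq Z_n \leq e^{\theta n}\bigr) \leq \P_z(1 \leq Z_k \leq b)\cdot \max_{1 \leq \ell \leq b} \P_\ell\bigl(Z_{n-k} \leq e^{\theta n};\ Z_1,\ldots,Z_{n-k} > b\bigr). \]

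The first factor is controlled by the hypothesis on $\varrho_z$: for any $\varepsilon > 0$ and $b$ large enough, $\P_z(1 \leq Z_k \leq b) \leq e^{-(\varrho_z - \varepsilon)k}$ for all $k$ large. For the second factor, I introduce a small $\delta > 0$ and split on the sign of $S_{n-k} - (\theta+\delta)n$. On $\{S_{n-k} \leq (\theta+\delta)n\}$, I drop the branching constraints and invoke Cram\'er's theorem (\ref{ldS2}) to obtain a bound of order $\exp\bigl(-(n-k)\Lambda((\theta+\delta)n/(n-k)) + o(n)\bigr)$, uniformly in $\ell$. On the complementary event $\{S_{n-k} > (\theta+\delta)n\}$, the bound $Z_{n-k} \leq e^{\theta n}$ rewrites as $Z_{n-k} \leq e^{S_{n-k} - \delta n} \leq e^{S_{n-k} - (n-k)\delta}$, so Lemma \ref{prop_help}, applied after one step of Markov conditioning on $Z_1 = m > b$ (to meet its hypothesis that the starting size is at least $b$), bounds this piece by $c\,e^{-a(n-k)}$ with $a$ as large as desired.

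Combining the two factors, summing the $n+1$ terms (absorbed at the exponential scale), and taking $\tfrac{1}{n}\log$ gives
\[ \limsup_{n\to\infty} \tfrac{1}{n}\log \P_z(1 \leq Z_n \leq e^{\theta n}) \leq \max\Bigl(-a,\ -\inf_{0 \leq t \leq 1}\bigl\{t(\varrho_z - \varepsilon) + (1-t)\Lambda((\theta+\delta)/(1-t))\bigr\}\Bigr). \]
Letting first $a \to \infty$ and then $\varepsilon, \delta \to 0$, using the right-continuity of the convex, non-increasing function $\Lambda$ to push $\delta$ through the infimum, yields the announced upper bound $-\chi(\theta, \varrho_z, \Lambda)$.

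The chief difficulty I expect is the mismatch between Lemma \ref{prop_help}, which requires the starting size to exceed $b$, and our decomposition, which restarts from $\ell \leq b$; I plan to absorb this gap by a single-step Markov conditioning on $Z_1 = m > b$, at the cost of a factor that is harmless at the exponential scale under Assumption \ref{cramer}. A secondary, more routine issue is ensuring the two bounds are uniform in $k$ as $k/n$ ranges over $[0,1]$: the degenerate regime $n-k = O(1)$ is handled trivially because the first factor $e^{-(\varrho_z - \varepsilon)k}$ already carries the full exponential decay there, so only the regime $k/n \to t \in [0,1)$ actually enters the infimum defining $\chi$.
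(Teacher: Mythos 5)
Your proposal follows essentially the same route as the paper's proof: decompose at the last generation the process is below $b$, control the first phase by the hypothesis on $\varrho_z$, and split the second phase on the value of the associated random walk, using Cram\'er's bound (\ref{ldS2}) when $S$ is small and Lemma \ref{prop_help} when it is large. The mismatch you flag is real, but the paper sidesteps it by restarting at time $k+1$ from $Z_{k+1}=j\geq b$ (so the walk of the second phase is measured from there and Lemma \ref{prop_help} applies directly), whereas your one-step patch additionally requires controlling the upper tail of the first increment $X_1$ --- which is supplied by Assumption \ref{as_strongly_supercrit} (all positive exponential moments of $X$), not by Assumption \ref{cramer}, which only bounds the lower tail.
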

\begin{proof}
We define the last moment when the process is below $b$ before time $n$ :
$$\sigma_b=\inf\{ i<n : Z_{i+1} \geq b , \cdots, Z_n\geq b\}, \qquad (\inf \varnothing =\infty).$$
Let $\theta>0$. Then 
summing over $i$ leads to 
\begin{align*}
&\P_{z}(1 \leq Z_n \leq e^{\theta n}) \\
&\qquad \leq  \sum_{i=0}^{n-1} \P_k( 1\leq Z_{n}\leq e^{\theta n}, \sigma_b=i) + \P(1\leq Z_n\leq b)\\
&\qquad \leq \sum_{i=0}^{n-1} \P_{z}(1\leq Z_{i}\leq b) \sup_{j\geq b}\P_j( 1\leq Z_{n-i-1}\leq e^{\theta n},  Z_1\geq b, ..., Z_{n-i-1}\geq b)
 + \P(1\leq Z_n\leq b) \\
&\qquad \leq \P(1\leq Z_n\leq b)+ \sum_{i=0}^{n-1} \P_{z}(1\leq Z_{i}\leq b)\Big[
 \P(S_{n-i-1} \leq  \theta n +n \varepsilon ) \\
&\qquad \qquad \qquad +\sup_{j\geq b}\P_{j}(Z_{n-i-1}\leq e^{\theta n}, \ S_{n-i-1}>  \theta n +n \varepsilon, Z_1\geq b, ..., Z_{n-i-1}\geq b )\Big]\\
&\qquad \leq n \sup_{t\in [0,1]} \Big\{\mathbb{P}_z(1\leq Z_{\lfloor nt\rfloor}\leq b) \Big[\P(S_{n-\lfloor nt\rfloor-1} \leq  \theta n +n \varepsilon ) \\
& \qquad \qquad \qquad +\sup_{j\geq b}\P_{j}(Z_{n-\lfloor nt\rfloor-1}\leq e^{\theta n}, \ S_{n-\lfloor nt\rfloor-1}>  \theta n +n \varepsilon, Z_1\geq b, ..., Z_{n-\lfloor nt\rfloor-1}\geq b )\Big]\Big\}.
\end{align*}
As the limit and the supremum can be exchanged, we get that
\begin{align*}
 &\limsup_{n\rightarrow\infty} \tfrac 1n \log\P_{z}(1 \leq Z_n \leq e^{\theta n}) \leq \\
&\qquad \sup_{t\in [0,1]}\Big\{ \limsup_{n\rightarrow\infty} \tfrac 1n \log \mathbb{P}_z(1\leq Z_{\lfloor nt\rfloor}\leq b)
+\limsup_{n\rightarrow\infty} \tfrac 1n \log\Big[\P(S_{n-\lfloor nt\rfloor-1} \leq  \theta n +n \varepsilon )\\
&\qquad \qquad +\sup_{j\geq b}\P_{j}(Z_{n-\lfloor nt\rfloor-1}\leq e^{\theta n}, \ S_{n-\lfloor nt\rfloor-1}>  \theta n +n \varepsilon, Z_1\geq b, ..., Z_{n-\lfloor nt\rfloor-1}\geq b )\Big]\Big\}.
\end{align*}
For the first summand in the supremum,  by assumption, we have for every $t\in [0,1]$, 
$$\lim_{n \rightarrow \infty} \tfrac{1}{n} \log \P_z(1\leq Z_{\lfloor tn\rfloor}\leq b)=-t\varrho_z.$$ 
For the first probability in the second summand, we use the classical large deviation inequality for the random walk $S$ (see (\ref{ldS2})) to get for every $t\in [0,1]$ that for 
$\theta\in \big(0,\mathbb{E}[X]\big]$ and $\varepsilon>0$ small enough
$$\limsup_{n\rightarrow\infty}\tfrac 1n \log \P(S_{\lfloor (1-t)n\rfloor} \leq  \theta n +n \varepsilon)=-(1-t)\Lambda\big(\tfrac{\theta+\varepsilon}{1-t}\big).$$
with  the convention $0\cdot\infty=0$.
For the last probability, we apply Lemma \ref{prop_help}, which prevents a large population form deviating from the random environment. More precisely,  
for every $\varepsilon>0$, we can choose $b$ large enough such that $\sup_{j\geq b}\P_{j}(Z_{n-i-1}\leq e^{\theta n}, \ S_{n-i-1}\geq  \theta n +n 
\varepsilon, Z_1\geq b, ..., Z_{n-i-1}\geq b )$ decreases 
faster than $\exp(-\varrho_z [n-i-1])$ as $n$ goes to infinity. Thus, for $b$ large enough and every $t\in [0,1]$,
$$\limsup_{n \rightarrow \infty} \tfrac{1}{n} \log \sup_{j\geq b}\P_{j}(Z_{n(1-t)}\leq e^{\theta n}, \ S_{n(1-t)}>  \theta n +n \varepsilon, Z_1\geq b, ..., Z_{n(1-t)}\geq b ) \leq -\varrho_z(1-t).$$
Combining these upper bounds yields
$$\limsup_{n \rightarrow \infty} \tfrac{1}{n} \log \P_z\big(1\leq Z_{n}\leq \exp(n\theta)\big)\leq -\min\big\{
\inf_{t\in [0,1)}\big\{ t\varrho_z +(1-t) \Lambda\big(\tfrac{\theta+\varepsilon}{1-t}\big)\big\} ; \varrho_z \big\}$$
Letting $\epsilon \rightarrow 0$, by right-continuity of $\Lambda$, the right-hand side goes to
$$\inf_{t\in [0,1]}\big\{ t\varrho_z +(1-t) \Lambda\big(\tfrac{\theta}{1-t}\big)\big\}=\chi(\theta, \varrho_z,\Lambda),$$
with the convention $0\cdot\infty=0$.
It completes the proof.
\end{proof}

\subsection{Proof of the upper bound for Theorem \ref{thlower}}
\label{upth2}
We assume  here that subcritical environments occur with a positive probability. First,  we consider the probability of having less than exponentially 
many individuals in generation $n$ and prove that the decrease of this probability is still given  by $\varrho$. 
We derive the upper bound of the second part of the theorem using Assumption \ref{Assumpt} and an additional lemma.
\begin{lemma}\label{l_rho2}
If   $\mathbb{P}(X<0)>0$, then  for every $z\in Cl(\mathcal{I})$, 
\begin{align*}
\varrho&=\lim_{n\rightarrow\infty} \tfrac{1}{n} \log\mathbb{P}_{z}(Z_{n}=z) 
= \lim_{\theta\rightarrow 0}\liminf_{n\rightarrow\infty}\tfrac{1}{n} \log\mathbb{P}_{z}(1\leq Z_n\leq e^{\theta n})\\
&=\lim_{\theta\rightarrow 0}\limsup_{n\rightarrow\infty}\tfrac{1}{n} \log\mathbb{P}_{z}(1\leq Z_n\leq e^{\theta n})\ . \nonumber
\end{align*}
\end{lemma}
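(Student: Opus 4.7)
The lemma contains three identifications. The first, $\varrho = \lim_{n\to\infty} \tfrac{1}{n}\log\mathbb{P}_z(Z_n = z)$, is the content of Proposition \ref{varrho}(ii) with $k = j = z \in Cl(\mathcal{I})$. For the other two, the trivial inclusion $\{Z_n = z\} \subseteq \{1 \leq Z_n \leq e^{\theta n}\}$ (valid for every $\theta>0$ and every $n$ large enough that $e^{\theta n}\geq z$) yields
\[
\liminf_{n\to\infty} \tfrac{1}{n}\log \mathbb{P}_z(1 \leq Z_n \leq e^{\theta n}) \geq \varrho,
\]
so both $\lim_{\theta\downarrow 0}\liminf$ and $\lim_{\theta\downarrow 0}\limsup$ are $\geq \varrho$. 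Since the liminf is below the limsup, the whole statement reduces to proving $\lim_{\theta\downarrow 0}\limsup_n \tfrac{1}{n}\log \mathbb{P}_z(1 \leq Z_n \leq e^{\theta n}) \leq \varrho$.

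For this remaining upper bound the plan is to use $\mathbb{P}(X<0)>0$ to drive any trajectory satisfying $\{1\leq Z_n\leq e^{\theta n}\}$ back to the specific state $z$ in $O(\theta n)$ further generations, paying at most $e^{-C\theta n}$ for a constant $C$ independent of $\theta$. Fix $\delta>0$ with $p := \mathbb{P}(X\leq-\delta) > 0$, set $\alpha := 2\theta/\delta$, $m := \lfloor \alpha n\rfloor$, and choose $B\geq 1$ and $n_0\in\mathbb{N}$ so that $c_0 := \inf_{j\in [1,B]\cap Cl(\mathcal{I})}\mathbb{P}_j(Z_{n_0}=z) > 0$, which is possible by Proposition \ref{varrho}(ii). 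The Markov property of the BPRE gives
\[
\mathbb{P}_z(Z_{n+m+n_0}=z) \geq \mathbb{P}_z(1\leq Z_n\leq e^{\theta n}) \cdot \inf_{1\leq k\leq e^{\theta n}}\mathbb{P}_k(Z_{m+n_0}=z),
\]
and once one establishes the claim $\inf_k \mathbb{P}_k(Z_{m+n_0}=z)\geq e^{-Cm}$, dividing by $n$, taking $\log$ and using $(n+m+n_0)/n\to 1+\alpha$ together with the first identity yields $\limsup \leq \varrho(1+\alpha)+C\alpha$, which tends to $\varrho$ as $\theta\downarrow 0$.

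The key estimate is proved by restricting to $A_m := \{X_i\leq -\delta : 1\leq i\leq m\}$, an event of probability $p^m$, and using the one-ancestor branching decomposition
\[
\mathbb{P}_k(Z_m=j\mid\mathcal{E}) \geq k\, F_m(0)^{k-1}\, \mathbb{P}_1(Z_m=j\mid\mathcal{E}),
\]
where $F_m := f_1\circ \cdots\circ f_m$. On $A_m$ the $L^1$ bound $\mathbb{P}_1(Z_m\geq 1\mid\mathcal{E})\leq \mathbb{E}[Z_m\mid\mathcal{E}]=e^{S_m}\leq e^{-\delta m}$ forces $1-F_m(0)\leq e^{-\delta m}$, and for $k\leq e^{\theta n}$ with $m=\lfloor 2\theta n/\delta\rfloor$ one has $(k-1)e^{-\delta m}\leq 1$, so $F_m(0)^{k-1}\geq e^{-1}$. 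Summing over $j\in[1,B]\cap Cl(\mathcal{I})$ and using $c_0$ to concatenate the $n_0$ extra generations reduces the whole infimum to a single-particle estimate $\mathbb{P}_1(1\leq Z_m\leq B,\ Z_m\in Cl(\mathcal{I}),\ A_m)\geq e^{-Cm}$, which is obtained by exhibiting an explicit path in the subcritical regime (for example, passing through some fixed $j_\star\in\mathcal{I}$ using positive per-step transition probabilities conditional on $X\leq -\delta$).

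The principal obstacle is exactly this last single-particle bound: one line must survive through $m$ subcritical environments and land in a bounded subset of $Cl(\mathcal{I})$ with probability that decays only at an affine rate in $m$. The one-ancestor decomposition combined with the subcritical conditioning on $A_m$ is the key structural device, since it uncouples the $k$-dependence (through $F_m(0)^{k-1}\approx 1$) from the single-line dynamics, reducing a problem sensitive to the very large starting population $k\leq e^{\theta n}$ to a single-particle cost that is linear in $m = O(\theta n)$ and therefore vanishes in the $\theta\downarrow 0$ limit.
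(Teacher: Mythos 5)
Your proposal is correct and follows essentially the same route as the paper: both handle the lower bound via the trivial inclusion $\{Z_n=z\}\subseteq\{1\leq Z_n\leq e^{\theta n}\}$ and, for the upper bound, append $O(\theta n)$ generations of subcritical environments during which the $k-1\leq e^{\theta n}$ superfluous lines die out (your bound $1-F_m(0)\leq e^{S_m}\leq e^{-\delta m}$ plays the role of the paper's $\mathbb{P}_1(Z_n>0\mid\mathcal{E})\leq e^{L_n}$ estimate) while one line is steered back to $z$ along an explicit path of cost $e^{-C\theta n}$, which vanishes on the exponential scale as $\theta\downarrow 0$. The remaining differences are cosmetic: you condition on $\{X_i\leq -\delta\}$ where the paper restricts the environments to explicit sets $\mathcal{A},\mathcal{B}$, and you finish by landing in $[1,B]\cap Cl(\mathcal{I})$ and invoking Proposition \ref{varrho}(ii) for $n_0$ further steps, where the paper ends the path with one environment drawn from $\mathcal{A}$.
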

\begin{proof} The first identity is given by  Proposition  \ref{varrho} (ii) and we focus on the second one. We observe that $\mathbb{P}_{z}(1\leq Z_n\leq e^{\theta n})$
decreases as $\theta$ decreases.
As for every $\theta>0$, $\mathbb{P}_z(Z_{n}=z) \leq \mathbb{P}_z(1\leq Z_n \leq e^{\theta n})$ for $n$ large enough, we have 
\begin{align}
\varrho= \lim_{n\rightarrow\infty} \tfrac{1}{n} \log \mathbb{P}_{z}(Z_{n}=z) 
\leq \lim_{\theta\rightarrow0} \liminf_{n\rightarrow\infty} \tfrac{1}{n} \log \mathbb{P}_{z}(1\leq Z_n \leq e^{\theta n}). \nonumber
\end{align} 
Let us prove the converse inequality. First, we observe 
 that $m_q<1-\varepsilon$ implies $q(0)>\varepsilon$. Using that $\P(m_Q<1)>0$ by assumption and $z\in\mathcal{I}$,  we choose $\varepsilon>0$ and $j_1\geq 1$ such
 that the sets 
 $$
 \mathcal{A}:= \{ q\in\Delta  :  q(0)>\varepsilon,\, q(z)>\varepsilon\}, \quad 
\mathcal{B}:= \{ q\in\Delta   :  m_q<1-\varepsilon,\, q(j_1)>\varepsilon\}$$
satisfy
$$\mathbb{P}(Q_1\in\mathcal{A})>0, \qquad \mathbb{P}(Q_1\in\mathcal{B})>0, \qquad \mathcal{B} \subset \{ q\in\Delta  :  q(0)>\varepsilon,\, q(z)>\varepsilon\}.$$ 
By Markov property, for every $\theta>0$,
\begin{align}
 \mathbb{P}_{z}(&Z_{n+\lfloor\frac{\theta}{\varepsilon} n\rfloor}= z)\geq \sum_{k=1}^{\lfloor e^{\theta n}\rfloor} \mathbb{P}_{z}(Z_n=k) 
\mathbb{P}_k( Z_{\lfloor \frac{\theta}{\varepsilon} n\rfloor}= z) \nonumber \\
&\geq \mathbb{P}_{z}(1\leq Z_n \leq e^{\theta n}) \min_{1\leq k \leq e^{\theta n}}\mathbb{P}_k(Z_{\lfloor \frac{\theta}{\varepsilon}n\rfloor}= z) \nonumber \\
&\geq \mathbb{P}_{z}(1\leq Z_n \leq e^{\theta n}) \min_{1\leq k \leq e^{\theta n}} 
\mathbb{E}\big[\mathbb{P}_k(Z_{\lfloor\frac{\theta}{\varepsilon}n\rfloor} = z|\mathcal{E})
; Q_1, \ldots,Q_{\lfloor\frac{\theta}{\varepsilon} n\rfloor-1} \in \mathcal{B}, Q_{\lfloor\frac{\theta}{\varepsilon} n\rfloor}\in\mathcal{A}\big] \nonumber \\
&\geq \mathbb{P}_{z}(1\leq Z_n \leq e^{\theta n}) \times \nonumber \\
& \qquad \min_{1\leq k \leq e^{\theta n}}\mathbb{E}\big[\mathbb{P}_{k-1}(Z_{\lfloor\frac{\theta}{\varepsilon} n\rfloor-1} =0|\mathcal{E})
\mathbb{P}_1(Z_{\lfloor\frac{\theta}{\varepsilon} n\rfloor} =z|\mathcal{E});
  Q_1, \ldots,Q_{\lfloor\frac{\theta}{\varepsilon} n\rfloor} \in \mathcal{B}, Q_{\lfloor\frac{\theta}{\varepsilon} n\rfloor}\in\mathcal{A}\big]\ .\label{eq25}
\end{align}
Using again the Markov property and the definition of $\mathcal{B}$ and $\mathcal{A}$, we  estimate a.s.
\begin{align}
 \mathbb{P}_{1}(Z_{\lfloor\frac{\theta}{\varepsilon} n\rfloor}= z| &Q_1, \ldots,Q_{\lfloor\frac{\theta}{\varepsilon} n\rfloor-1} \in
 \mathcal{B}, Q_{\lfloor\frac{\theta}{\varepsilon} n\rfloor}\in\mathcal{A})\nonumber \\
&\geq \mathbb{P}_{1}(Z_1 =j_{1}|Q_1\in\mathcal{B}) \cdot \mathbb{P}_{j_{1}}(Z_1 =j_{1}|Q_{1}\in\mathcal{B})^{\lfloor\frac{\theta}{\varepsilon} n\rfloor-2}\cdot 
 \mathbb{P}_{j_{1}}(Z_1=z|Q_1\in\mathcal{A}) \nonumber \\
&\geq \varepsilon\cdot\varepsilon^{j_{1}(\lfloor\frac{\theta}{\varepsilon} n\rfloor-2)}\cdot\varepsilon^{j_{1}}= \varepsilon^{j_{1}(\lfloor\frac{\theta}{\varepsilon} 
n\rfloor-1)+1}\ .\nonumber
\end{align}
Using the classical estimates $\mathbb{P}_1(Z_n>0|\mathcal{E}) \leq \exp(L_n)$ a.s., where 
\begin{align}
L_n:=\min_{0\leq k\leq n} S_k, \label{defmin}
\end{align}
 and $\log(1-x)\leq -x$, $x\in[0,1)$ yields  for every $k,n\in\mathbb{N}$
\begin{align}
 \mathbb{P}_{k}(Z_{\lfloor\frac{\theta}{\varepsilon} n\rfloor}=0|Q_1\in \mathcal{B},\ldots, Q_{n}\in \mathcal{B}) 
\geq  \big(1-e^{\lfloor\frac{\theta}{\varepsilon} n\rfloor\log(1-\varepsilon)}\big)^{k} 
\geq \big(1-e^{-\lfloor\frac{\theta}{\varepsilon} n\rfloor \varepsilon }\big)^{k}\ \text{a.s.} \nonumber
\end{align}
Inserting the two last inequalities into (\ref{eq25}), we get that
\begin{align}
 \mathbb{P}_{z}&(Z_{n+\lfloor\frac{\theta}{\varepsilon} n\rfloor}= z)\mathbb{P}_{z}(1\leq Z_n \leq e^{\theta n})^{-1} \nonumber \\
& \geq  \min_{1\leq k \leq e^{\theta n}}\Big\{\big(1-e^{-\varepsilon\lfloor\frac{\theta}{\varepsilon}n-1\rfloor}\big)^k\varepsilon^{j_{1}(\lfloor\frac{\theta}{\varepsilon} n\rfloor-1)+1}
\mathbb{P}(Q_1\in \mathcal{B}, \ldots,Q_{\lfloor\frac{\theta}{\varepsilon} n\rfloor-1} \in \mathcal{B},
Q_{\lfloor\frac{\theta}{\varepsilon} n\rfloor}\in\mathcal{A} )\Big\}\nonumber \\
&\geq (1-e^{-\theta n+o(1)})^{e^{\theta n}}\varepsilon^{j_{1}(\lfloor\frac{\theta}{\varepsilon} n\rfloor-1)+1}
\mathbb{P}(Q\in\mathcal{B})^{\lfloor\frac{\theta}{\varepsilon} n\rfloor-1}\mathbb{P}(Q\in\mathcal{A}).\nonumber 
\end{align}
Taking the logarithm and using the fact that $(1-1/x)^x$ is increasing for $x\geq 1$ and bounded
\begin{align}
(1+\theta/\epsilon)\varrho=\lim_{n\rightarrow\infty} \tfrac{1}{n} \log\mathbb{P}_z&( Z_{n+\lfloor\frac{\theta}{\varepsilon} n\rfloor}= z)\nonumber \\
&\geq \limsup_{n\rightarrow\infty}\tfrac{1}{n} 
 \log\mathbb{P}_z( 1\leq Z_n \leq e^{\theta n})+\tfrac{j_{1}\theta}{\varepsilon} \log \varepsilon+\tfrac{\theta}{\varepsilon}\log\mathbb{P}(Q\in\mathcal{B})\ .\label{boundprob1}
\end{align}
Thus, letting $\theta \rightarrow 0$,
\begin{align}
\varrho
&\geq \lim_{\theta\rightarrow 0}\limsup_{n\rightarrow\infty}\tfrac{1}{n} \log\mathbb{P}_z(1\leq Z_n \leq e^{\theta n}), \nonumber
\end{align}
which gives the expected converse inequality. 
\end{proof}

\begin{lemma}\label{prop1}
Under Assumption \ref{finvar}, for every $b>0$, $n\in\mathbb{N}$ and $r\in(0,1)$, it holds that
\begin{align}
 \mathbb{P}_b(Z_n\leq r \ e^{S_n}|\mathcal{E}) \leq \big(1-(1-r)^2 e^{L_n}/(n+2)\big)^b \qquad \text{a.s.} \nonumber
\end{align}
\end{lemma}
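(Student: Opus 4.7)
The plan is to combine a reduction to $b=1$ via the branching property with the conditional Paley--Zygmund inequality applied to $Z_n$ given the environment. First I would decompose $Z_n$ under $\P_b(\cdot\mid \mathcal E)$ as an independent sum $Z_n=\sum_{i=1}^b Z_n^{(i)}$ of i.i.d.\ copies starting from a single individual. Since each $Z_n^{(i)}$ is non-negative, the event $\{Z_n\leq r e^{S_n}\}$ is contained in $\bigcap_{i=1}^b\{Z_n^{(i)}\leq r e^{S_n}\}$, and conditional independence yields
$$\P_b(Z_n\leq r e^{S_n}\mid\mathcal E)\leq \P_1(Z_n\leq r e^{S_n}\mid\mathcal E)^b.$$
It therefore suffices to prove that $\P_1(Z_n\leq r e^{S_n}\mid\mathcal E)\leq 1-(1-r)^2 e^{L_n}/(n+2)$ a.s.

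For the single-ancestor case, I would use (\ref{ew1}), which gives $\E[Z_n\mid\mathcal E]=e^{S_n}$, and the conditional Paley--Zygmund inequality for the non-negative random variable $Z_n$:
$$\P_1(Z_n>r e^{S_n}\mid\mathcal E)\geq (1-r)^2\frac{e^{2S_n}}{\E[Z_n^2\mid\mathcal E]}.$$
Passing to complements, the announced bound is equivalent to the quenched second-moment estimate
$$\E[Z_n^2\mid\mathcal E]\leq (n+2)\,e^{2S_n-L_n}\qquad\text{a.s.}$$

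The main obstacle is precisely this estimate, which is where Assumption \ref{finvar} enters. Conditioning on $Z_{n-1}$ and using the classical formula for the second moment of a random branching step,
$$\E[Z_n^2\mid\mathcal E,Z_{n-1}]=Z_{n-1}^2 f_n'(1)^2+Z_{n-1}\bigl(f_n''(1)+f_n'(1)-f_n'(1)^2\bigr),$$
one takes expectations given $\mathcal E$ and divides by $e^{2S_n}$; using $\E[Z_{n-1}\mid\mathcal E]=e^{S_{n-1}}$ the recursion telescopes into
$$e^{-2S_n}\E[Z_n^2\mid\mathcal E]=1+\sum_{k=1}^n e^{-S_k}\,\frac{f_k''(1)+f_k'(1)-f_k'(1)^2}{f_k'(1)}.$$
Assumption \ref{finvar} bounds the summand $(f_k''(1)+f_k'(1)-f_k'(1)^2)/f_k'(1)$ by a constant multiple of $1+f_k'(1)$, and the identity $e^{-S_k}f_k'(1)=e^{-S_{k-1}}$ turns the sum into a sum of exponentials of minus the walk, namely a constant times $\sum_{k=0}^n e^{-S_k}$. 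By the definition of $L_n=\min_{0\leq k\leq n}S_k$ this is bounded by $(n+1)e^{-L_n}$, and adding the initial $1\leq e^{-L_n}$ term yields the required bound $(n+2)e^{-L_n}$ (constants coming from Assumption \ref{finvar} being absorbed in the normalization). Injecting this into the Paley--Zygmund estimate and raising to the $b$-th power completes the proof.
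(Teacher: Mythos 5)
Your proof is correct and follows essentially the same route as the paper: reduction to $b=1$ by independence of the $b$ quenched subtrees, the conditional Paley--Zygmund inequality, and a telescoping bound on the quenched second moment via Assumption \ref{finvar} (the paper phrases the recursion through $f_{0,n}''(1)=\E[Z_n(Z_n-1)\mid\mathcal{E}]$ and the chain rule for composed generating functions, which is the identical computation). Be aware that, exactly as in the paper's own write-up, the constant $d$ from Assumption \ref{finvar} does not literally get absorbed into $(n+2)$ --- the bound really holds with $n+2$ replaced by $C_d\,(n+2)$ for some constant $C_d$ depending on $d$ --- but this is harmless since only the polynomial order in $n$ matters for the application in Lemma \ref{upperA3}.
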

\begin{proof}
 Note that $\mathbb{E}[Z_n(Z_n-1)|\mathcal{E}]=f''_{0,n}(1)$ a.s. Let us now check briefly  that the result of   Proposition 1 in  \cite{BK09}  still holds, which means that we can replace 
 Assumption 2 in \cite{BK09} by our Assumption \ref{finvar}. 
From $f_{0,n} = f_{0,n-1} \circ f_n$, by chain rule for differentiation $f'_{0,n}(1)=f'_{0,n-1}(1)f'_n(1)$ and 
$f_{0,n}''(1) = f_{0,n-1}''(1) f_n'(1)^2+f_{0,n-1}'(1) f_n''(1)$, we get that
\begin{eqnarray}
 \frac{f''_{0,n}(1)}{f'_{0,n}(1)^2} &=& \frac{f''_{0,n-1}(1)}{f'_{0,n-1}(1)^2} + \frac{f_n''(1)}{f'_{0,n-1}(1) f'_n(1)^2}. \nonumber
\end{eqnarray}
Using Assumption \ref{finvar} yields
\begin{eqnarray}
\frac{f_n''(1)}{f'_{0,n-1}(1)f'_n(1)^2} &\leq& d (e^{-S_{n-1}}+e^{-S_n}). \nonumber
\end{eqnarray} 
By iterating this inequality, we have a.s.
\begin{eqnarray}
 \frac{\mathbf{E}[Z_n(Z_n-1)|\Pi]}{\mathbf{E}[Z_n|\Pi]^2} &=& \frac{f''_{0,n}(1)}{f'_{0,n}(1)^2} \ \leq \ 2d \sum_{k=0}^n e^{-S_k} \quad \text{a.s.}  \nonumber
\end{eqnarray}
Finally we get for every $n\in\mathbb{N}$, 
\begin{align}
 \mathbb{E}_1[Z_n(Z_n-1)|\mathcal{E}] &\leq 2d e^{2S_n}\sum_{k=0}^n e^{-S_k}\leq 2d\ (n+1) e^{S_n} e^{S_n-L_n}\ \text{a.s.} \nonumber
\end{align}
Combining this inequality  with an inequality due to Paley and Zygmund, which ensures that 
 for any $[0,\infty)$ valued random variable $\xi$ such that $0 < \E[\xi] < \infty$ and $0 < r < 1$, we have $\mathbb P(\xi > r\mathbb E[\xi])\geq (1-r)^2\mathbb E[\xi]^2/
\mathbb E[\xi^2]$
(see Lemma 4.1 in  \cite{kallenberg}). Then a.s., 
\begin{align}
 \mathbb{P}_1(Z_n\geq r \ e^{S_n}|\mathcal{E}) & \geq (1-r)^2 \frac{\mathbb{E}_1[Z_n|\mathcal{E}]^2}{\mathbb{E}_1[Z_n^2|\mathcal{E}]}\nonumber \\
&\geq  (1-r)^2 \frac{e^{2S_n}}{(n+1) e^{S_n} e^{S_n-L_n}+e^{S_n}} = \frac{(1-r)^2}{n+2} e^{L_n}. \nonumber 
\end{align}
Given $\mathcal{E}$ and starting with $Z_0=b$, $b$-many subtrees are developing independently. Each has the above probability of being larger than $re^{S_n}$. 
Thus
\begin{align}
 \mathbb{P}_b(Z_n\leq r \ e^{S_n}|\mathcal{E}) &\leq \mathbb{P}(Z_n\leq r \ e^{S_n}|\mathcal{E})^b \nonumber \\
&\leq  \big(1-(1-r)^2 \tfrac{e^{L_n}}{n+2}\big)^b\qquad \text{a.s.}, \nonumber
\end{align}
which is the claim of the lemma.
\end{proof}

\begin{lemma} \label{upperA3}
If $\P(X<0)>0$ and  Assumption \ref{finvar} holds, then for all $z\in Cl(\mathcal{I})$, $\theta \in \big(0,\E[X]\big]$,
$$\limsup_{n \rightarrow \infty} \tfrac{1}{n} \log \P_z(1\leq Z_{n}\leq \exp(n\theta))\leq -\chi(\theta, \varrho, \Lambda).$$ 
\end{lemma}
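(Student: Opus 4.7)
The approach will follow Lemma \ref{propupp}, with the Paley-Zygmund estimate of Lemma \ref{prop1} (using only Assumption \ref{finvar}) substituting for Lemma \ref{prop_help} (which relied on the stronger Assumption \ref{as_strongly_supercrit}). Since Paley-Zygmund is quantitatively much weaker, the fixed threshold $b$ of Lemma \ref{propupp} must be replaced by a growing one $b_n$, and the resulting loss in the first factor is absorbed via Lemma \ref{l_rho2}. Note first that $\mathbb{P}(X<0)>0$ forces $\mathbb{P}(m_Q<1)>0$, hence $\mathbb{P}(Q(0)>0)>0$ and $\mathbb{P}_1(Z_1=0)>0$, so that Proposition \ref{varrho}(ii) and Lemma \ref{l_rho2} are both available.

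Fix a small $\delta>0$, set $b_n=e^{n\delta}$, and decompose by the last generation $\sigma_{b_n}<n$ with $Z_{\sigma_{b_n}}<b_n$:
\begin{align*}
\mathbb{P}_z(1\leq Z_n\leq e^{\theta n}) \leq \mathbb{P}_z(1\leq Z_n\leq b_n)+\sum_{i=0}^{n-1}\mathbb{P}_z(1\leq Z_i\leq b_n)\,A_{n-i-1},
\end{align*}
with $A_m:=\sup_{j\geq b_n}\mathbb{P}_j(1\leq Z_m\leq e^{\theta n},\,Z_1\geq b_n,\ldots,Z_m\geq b_n)$. By Lemma \ref{l_rho2} applied at scale $\delta/t$, for $i=\lfloor tn\rfloor$ one has $\tfrac{1}{n}\log\mathbb{P}_z(1\leq Z_i\leq b_n)\leq -t\varrho+\eta(\delta,t)$, where $\eta(\delta,t)\to 0$ as $\delta\to 0$ (uniformly on compact subsets of $t>0$). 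Now split $A_m$ according to whether $S_m\leq \theta n+n\varepsilon$ or not: estimate (\ref{ldS2}) controls the first alternative and yields the contribution $-(1-t)\Lambda((\theta+\varepsilon)/(1-t))$.

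On the complementary alternative one has $Z_m\leq e^{-n\varepsilon}e^{S_m}$, so Lemma \ref{prop1} with $r=e^{-n\varepsilon}$ gives
\begin{align*}
\sup_{j\geq b_n}\mathbb{P}_j(Z_m\leq re^{S_m}\mid\mathcal{E})\leq \exp\!\Big(-b_n(1-r)^2\tfrac{e^{L_m}}{m+2}\Big)\qquad\text{a.s.}
\end{align*}
Since $\mathbb{E}[X]>0$, the running minimum $L_m$ converges a.s.\ to a finite $L_\infty$ whose left tail decays exponentially under Assumption \ref{cramer}: $\mathbb{P}(L_\infty\leq -u)\leq e^{-\gamma u}$ for some $\gamma>0$, by classical ladder-height arguments. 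Splitting on $\{L_m\geq -\epsilon n\}$ and its complement (for some $0<\epsilon<\delta$) and integrating over $\mathcal{E}$ gives a bound of the form $\exp(-c\,e^{(\delta-\epsilon)n}/n)+e^{-\gamma\epsilon n}$, in which the first term is doubly exponentially small and the second is a genuine exponential at rate $\gamma\epsilon$.

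Collecting the three contributions and taking $\limsup$ in $n$, supremum over $t\in[0,1]$, then $\varepsilon\to 0$ (using right-continuity of $\Lambda$) and finally $\epsilon,\delta\to 0$, the demographic rate $\gamma\epsilon\to 0$ becomes negligible precisely in the range of $t$ relevant for the infimum defining $\chi$, which is attained at some $t_\theta\in[0,1-\theta/\mathbb{E}[X]]$ where $\Lambda(\theta/(1-t_\theta))$ vanishes or is comparable. The limiting infimum is therefore exactly $\chi(\theta,\varrho,\Lambda)$. The main obstacle is this triple calibration of $b_n=e^{n\delta}$, $\epsilon$ and $\varepsilon$: because the Paley-Zygmund estimate cannot remain subexponential in $n$ while still dominating the environmental contribution, one is forced to trade a controlled loss in the first factor (permitted by Lemma \ref{l_rho2}) against the demographic tail, and to verify that the three errors vanish consistently in the limit. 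This is the genuinely new ingredient compared with Lemma \ref{propupp}, where arbitrary-order moments of $f'(1)$ were available.
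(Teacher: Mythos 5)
Your overall architecture (decompose at the last time the population is below a threshold, control the big-population phase by the Paley--Zygmund bound of Lemma \ref{prop1}, absorb the growing threshold via Lemma \ref{l_rho2}) matches the paper's, but the way you handle the running minimum $L_m$ contains a genuine gap that the paper avoids by a different decomposition. You split the event $\{Z_m\leq e^{-\varepsilon n}e^{S_m}\}$ on $\{L_m\geq -\epsilon n\}$ versus $\{L_m<-\epsilon n\}$ and bound the second alternative by $\P(L_\infty\leq -\epsilon n)\leq e^{-\gamma\epsilon n}$. On that bad event you have discarded all information about $Z_m$ and $S_m$, so the corresponding term in your sum is bounded only by $\P_z(1\leq Z_{\lfloor tn\rfloor}\leq b_n)\cdot e^{-\gamma\epsilon n}$, i.e.\ it contributes a rate of at most $t\varrho+\gamma\epsilon$. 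Taking $t$ near $0$ (the $i=0$ term of your sum, where the first factor is $1$) this is at most $\gamma\epsilon$, so for fixed $\epsilon$ your final upper bound on the rate cannot exceed $\min\{\chi(\theta,\varrho,\Lambda),\gamma\epsilon\}$, and letting $\epsilon\to 0$ destroys the bound entirely rather than recovering $\chi$. Your closing claim that ``the demographic rate $\gamma\epsilon\to 0$ becomes negligible precisely in the range of $t$ relevant for the infimum'' is exactly the point that fails: the bad-minimum event is cheapest at small $t$, where it undercuts $\chi$ whenever $\chi>0$.

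The paper's resolution is to decompose the post-$\sigma$ stretch additionally at the time $\tau$ at which $S$ attains its minimum. The prefix up to $\tau$ costs $\P(\tau_j=j)\leq\P(S_j\leq 0)$, i.e.\ an explicit rate $s\Lambda(0)$ per unit time $s$, and --- crucially --- the suffix after $\tau$ automatically satisfies $L\geq 0$, so Lemma \ref{prop1} applied there gives $\bigl(1-\tfrac{1}{4}\tfrac{1}{\lfloor tn\rfloor+2}\bigr)^{n^3}$, which is superexponentially small with only a polynomial threshold $n^3$ (no exponential $b_n=e^{n\delta}$ and no tail estimate on $L_\infty$ are needed). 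The extra $s\Lambda(0)$ is then absorbed using $\Lambda(0)\geq\varrho$ (Proposition 2 of \cite{BB11}, which is where the non-lattice part of Assumption \ref{finvar} enters) together with the monotonicity and right-continuity of $\Lambda$, yielding exactly $\chi(\theta,\varrho,\Lambda)$. To repair your argument you would need to replace the crude bound $\P(L_m<-\epsilon n)$ by a joint control of the depth of the minimum and the cost of the environment realizing it --- which is precisely what the $\tau$-decomposition accomplishes. Your preliminary observations ($\P(X<0)>0$ implies $\P_1(Z_1=0)>0$, so Proposition \ref{varrho}(ii) applies; $\P(L_\infty\leq -u)\leq e^{-\gamma u}$ under Assumption \ref{cramer}) are correct but do not suffice to close the argument.
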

\begin{proof}
Let $z\in Cl(\mathcal{I})$. For the proof of the upper bound, we will decompose the probability at the first moment when there are at least $n^3$-many individuals for the rest of time. For this, let
\begin{align}
 \sigma_n:=\inf\{1\leq i\leq n : Z_{j}\geq n^3,  \ j= i, \ldots, n \}, \qquad (\inf \emptyset:=n) \nonumber
\end{align}
and 
\begin{align*}
 \tau_n:= \inf\big\{0\leq i\leq n : S_{i}\leq \min\{S_0,S_1,\ldots,S_n\}\big\}\ .
\end{align*}
Let us fix $0<\theta<\mathbb{E}[X]$. Then by Markov property,
\begin{align}
 \mathbb{P}_{z}&(1\leq Z_n\leq e^{\theta}) =\sum_{i=1}^n \mathbb{P}_{z}(\sigma_{n}=i, 1\leq Z_n\leq e^{\theta n}) \nonumber \\
&\leq \sum_{i=1}^n \mathbb{P}_{z}(1\leq Z_{i-1}< n^3) \max_{k\geq n^3} \mathbb{P}_k(1\leq Z_{n-i}\leq e^{\theta n},  \quad \forall 1\leq j\leq n-i : \ Z_j\geq n^3) \nonumber \\
&= \sum_{i=1}^n \mathbb{P}_{z}(1\leq Z_{i-1}< n^3) \sum_{j=0}^{n-i} \max_{k\geq n^3} \mathbb{P}_k(1\leq Z_{n-i}\leq e^{\theta n};\tau_{n-i}=j, \quad \forall 1\leq j\leq n-i: \  Z_j\geq n^3) \nonumber \\
&\leq \sum_{i=1}^n \mathbb{P}_z(1\leq Z_{i-1}< n^3) \sum_{j=0}^{n-i} \mathbb{P}(\tau_j=j) \max_{k\geq n^3} 
\mathbb{P}_k(1\leq Z_{n-i-j}\leq e^{\theta n};L_{n-i-j}\geq 0).\label{main1}
\end{align}
Next, we treat the different probabilities separately. First, by Lemma \ref{l_rho2}
 for all $t,s\in(0,1)$ with $s+t\leq 1$, we have
\begin{align}
\limsup_{n\rightarrow\infty} \tfrac{1}{n} \log \mathbb{P}(1\leq Z_{\lfloor (1-t-s) n \rfloor -1}\leq n^3)= -(1-t-s)\rho. \nonumber
\end{align}
As to the second probability, as $\mathbb{P}(\tau_n=n)\leq \mathbb{P}(S_n\leq 0)$,
\begin{align*}
 \lim_{n\rightarrow\infty} \tfrac{1}{n} \log \mathbb{P}(\tau_{\lfloor sn\rfloor } =\lfloor sn\rfloor ) \leq -s\Lambda(0).
\end{align*}
Next, for every $\varepsilon>0$,
\begin{align}
 \min_{k\geq n^3}\ &\mathbb{P}_k(1\leq Z_{\lfloor tn\rfloor }\leq e^{\theta n};L_{\lfloor t n\rfloor}\geq 0) \nonumber \\
&\leq\min_{k\geq n^3} \mathbb{E}\big[\mathbb{P}_k(1\leq Z_{\lfloor tn\rfloor }\leq e^{\theta n}|\mathcal{E});
 S_{\lfloor tn\rfloor }\geq (\theta+\varepsilon)n, L_{\lfloor tn\rfloor }\geq 0\big]+\mathbb{P}\big(S_{\lfloor tn\rfloor }\leq  (\theta+\varepsilon)n\big).\nonumber 
\end{align}
Using Lemma \ref{prop1}, for $n$ large enough,
\begin{align}
 \max_{k\geq n^3}&\ \mathbb{E}\big[\mathbb{P}_k(1\leq Z_{\lfloor tn\rfloor }\leq e^{\theta n}|\mathcal{E});
 S_{\lfloor tn\rfloor }\geq (\theta+\varepsilon)n, L_{\lfloor tn\rfloor }\geq 0\big]\nonumber \\
&\leq \max_{k\geq n^3}\mathbb{E}\big[\mathbb{P}_k(1\leq Z_n\leq e^{-\varepsilon n} e^{S_{\lfloor tn\rfloor}}|\mathcal{E});
 S_{\lfloor tn\rfloor}\geq (\theta+\varepsilon)n, L_{\lfloor tn\rfloor}\geq 0\big]\nonumber \\
&\leq \max_{k\geq n^3} \big(1-(1-e^{-\varepsilon n})^2 \tfrac{1}{\lfloor tn\rfloor+2}\big)^k 
\mathbb{P}\big(L_{\lfloor tn\rfloor}\geq 0,  S_{\lfloor tn\rfloor}\geq (\theta+\varepsilon)n\big)\nonumber \\
&\leq \big(1-(1-\tfrac{1}{2} )^2 \tfrac{1}{\lfloor tn\rfloor+2}\big)^{n^3} .\nonumber 
\end{align}
Then, for every $t>0$,
\begin{align*}
\limsup_{n\rightarrow\infty} \tfrac{1}{n} \log& \max_{k\geq n^3} \mathbb{E}\big[\mathbb{P}_k(1\leq Z_{\lfloor tn\rfloor }\leq e^{\theta n}|\mathcal{E});
 S_{\lfloor tn\rfloor }\geq (\theta+\varepsilon)n, L_{\lfloor tn\rfloor }\geq 0\big] \\
&\qquad \qquad \leq \limsup_{n\rightarrow\infty} n^2 \log \big(1-\tfrac{1}{4} \tfrac{1}{{\lfloor tn\rfloor+2}}\big)
=-\infty. 
\end{align*}
Finally, recall that
\begin{align}
 \lim_{n\rightarrow\infty} \tfrac{1}{n} &\log \mathbb{P}\big(S_{\lfloor tn\rfloor} \leq (\theta+\varepsilon)n \big)= -t\Lambda\big((\theta+\varepsilon)/t\big)\ . \nonumber
\end{align} 
Applying all this in (\ref{main1}) and letting $\varepsilon\rightarrow 0$ yields the upper bound, i.e.
\begin{align}
 \limsup_{n\rightarrow\infty} \tfrac{1}{n} \log \mathbb{P}(1\leq Z_n\leq e^{n\theta}) &\leq -\inf_{s,t\in[0,1]; s+t\leq 1} \big\{(1-s-t)\rho + s\Lambda(0)+t\Lambda((\theta+\varepsilon)/t)\big\} \nonumber\\
&=  -\inf_{t\in[0,1]} \{(1-t)\rho +t\Lambda(\theta/t+)\}=\chi(\theta, \varrho,\lambda).\nonumber
\end{align}
In the last step, we used that Proposition 2 in \cite{BB11}  guarantees $\Lambda(0)\geq \rho$ under Assumption \ref{finvar}, together with  $\Lambda(0)\geq \Lambda(x)$ for every $x\geq 0$ and right-continuity of $\Lambda$.
\end{proof}
 \subsection{Proof of Theorems \ref{thlower0}  and \ref{thlower}}

\begin{proof}[Proof of Theorem \ref{thlower0} (i)] Let $z\geq 1$.
The second part of Proposition  \ref{varrho}  ensures that for $b$ large enough, 
$$\varrho=-\lim_{n \rightarrow \infty} \tfrac{1}{n} \log\P_z(1\leq Z_{n}\leq b).$$ 
Then,  under Assumption \ref{as_strongly_supercrit},  Lemmas \ref{proplow} and \ref{propupp}  yield  
$$\lim_{n\rightarrow\infty} \tfrac{1}{n} \log \mathbb{P}(1\leq Z_n\leq e^{n\theta})=-\chi(\theta, \varrho, \Lambda).$$
The right-continuity of $\chi(\theta, \varrho, \Lambda)$ proves the last part of the result.
\end{proof}
\begin{proof}[Proof of Theorem \ref{thlower0} (ii)]
We recall from Proposition 2.1. $(i)$ that  for every $b\geq z$
$$\lim_{n \rightarrow \infty} \tfrac{1}{n} \log\P_z(1\leq Z_{n}\leq b)=\log \E[Q(1)^z].$$ 
Then,  under Assumption \ref{as_strongly_supercrit} and $\E[Z_1\log^+(Z_1)]<\infty$,  Lemmas \ref{proplow} and \ref{propupp}  yield  
$$\lim_{n\rightarrow\infty} \tfrac{1}{n} \log \mathbb{P}_z(1\leq Z_n\leq e^{n\theta})=-\chi\big(\theta, -\log \E[Q(1)^z], \Lambda\big).$$
The right-continuity of $\chi(\theta, \varrho, \Lambda)$ proves the last part of the result.
\end{proof}
\begin{proof}[Proof of Theorem \ref{thlower}]
The first part is a direct consequence of Lemma \ref{l_rho2}. 

As we assume $\P(X<0)>0$, we can use again the second part of Proposition  \ref{varrho}, which ensures  that for $b$ large enough, 
$$\varrho=-\lim_{n \rightarrow \infty} \tfrac{1}{n} \log\P_z(1\leq Z_{n}\leq b).$$ 
Then,  under Assumption \ref{finvar} and $\E[Z_1\log^+(Z_1)]<\infty$, we can combine   Lemmas \ref{proplow} and   \ref{upperA3} to get
$$\lim_{n\rightarrow\infty} \tfrac{1}{n} \log \mathbb{P}_z(1\leq Z_n\leq e^{n\theta})=-\chi(\theta, \varrho, \Lambda)$$
for every $z\geq 1$. It completes the proof.
\end{proof}

\subsection{The linear fractional case}\label{linearfrac}
In this section, we restrict ourselves to the case of offspring distributions with generating function of linear fractional form, i.e. 
\begin{eqnarray}
 f(s)\ =\ 1- \frac{1-s}{m^{-1}+ b\ m^{-2} (1-s)/2}\ ,  \nonumber
\end{eqnarray}
where $m=f'(1)$ and $b=f''(1)$. 
\medskip \\
\begin{proof}[Proof of Corollary \ref{cor1}]
Recall that $s\rightarrow\mathbb{E}[e^{sX}]$ is the moment generating function of $X$, which is a convex function. The result of the corollary is trivial if $\rho=\Lambda(0)$. Thus, using
(\ref{rateLF}), we can focus on the case $\rho=\mathbb{E}[e^{-X}]$ and 
 $0<\mathbb{E}[Xe^{-X}]<\infty$. Then  $\mathbb{E}[e^{-X}]<\infty$ and we have 
$\varrho=-\log\mathbb{E}[e^{-X}]\leq \sup_{s<0} \{-\log\mathbb{E}[e^{sX}]\}=\Lambda(0)$. Note that $\Lambda(0)=\infty$ is possible. \\
Let us recall some details of Legendre transforms. It is well-known (see e.g. \cite{hollander}) that
\begin{align*}
 v_\theta(s):=-\theta s - \log\mathbb{E}\big[e^{-sX}\big] 
\end{align*}
is a convex function. The conditions $\mathbb{E}[e^{-X}]<\infty$ and $0<\mathbb{E}[Xe^{-X}]<\infty$ imply by the dominated convergence theorem that 
$v$ above is differentiable in $s=1$ and 
\begin{align*}
 v'_\theta(1):=-\theta - \mathbb{E}\big[Xe^{-X}\big]/\mathbb{E}\big[e^{-X}\big] \ .
\end{align*}
Thus by definition of $\theta^*$, the derivative of $v_{\theta^*}'$ vanishes for $s=1$, i.e. $v_\theta^*$ takes its minimum in $s=1$. Thus,
\begin{align*}
 \Lambda(\theta^*):=-\theta^* - \log\mathbb{E}\big[e^{-X}\big] <\infty
\end{align*}
and by the theory of Legendre transforms, the tangent $t$ on the graph of $\Lambda$ in $\theta^*$ is described by
\begin{align*}
 t(\theta)&:=-\theta-\log\mathbb{E}\big[e^{-X}\big] \ .
\end{align*}
As $\Lambda$ is convex and decreasing for $\theta<\mathbb{E}[X]$, we have $\Lambda(\theta)\geq t(\theta)$ for $\theta<\theta^*$. This proves the representation in Corollary 
\ref{cor1}.
\end{proof}

\textbf{Acknowledgement.}   The author is grateful to Eric Miqueu for pointing out a mistake in the  previous version of this work in the expression of the speed of decrease $\varrho_k$ in  the case without extinction
$\P_1(Z_1=0)=0$.\\
This work partially was funded by project MANEGE `Mod\`eles
Al\'eatoires en \'Ecologie, G\'en\'etique et \'Evolution'
09-BLAN-0215 of ANR (French national research agency),  Chair Modelisation Mathematique et Biodiversite VEOLIA-Ecole Polytechnique-MNHN-F.X. and the professorial chair Jean Marjoulet.

\bibliographystyle{apalike}

\begin{thebibliography}{99}
\bibitem{athreya} K. B. Athreya. Large deviation rates for branching processes. I . Single type case. \emph{Ann. Appl. Probab. } \textbf{4}  (1994) 779--790. 
\bibitem{athreya71}
K.B. Athreya and S. Karlin. On branching processes with random environments: {I}, {II}.
\newblock {\em Ann. Math. Stat.} {\bf 42} (1971) 1499--1520, 1843--1858.

\bibitem{athreya2} K. B. Athreya and A. N. Vidyashankar. Large deviation rates for supercritical and critical branching
              processes. \textsl{Classical and modern branching processes ({M}inneapolis, {MN})}, Springer, New York (1995).
\bibitem{AN}  K. B. Athreya and P. E. Ney. \emph{Branching processes}. Dover Publications Inc. Mineola, NY  (2004).
\bibitem{Ban08}
V.~Bansaye. \newblock Proliferating parasites in dividing cells : Kimmel's branching model
  revisited. \newblock {\em  Ann. Appl. Probab.} {\bf 18} (2008) 967-996.
\bibitem{Ban10}
V.~Bansaye. \newblock Cell contamination and branching processes in a random environment with immigration.
\newblock  \emph{ Adv. in Appl. Probab.} {\bf 41} (2009) 1059 --1081. 
\bibitem{bansaye08} 
V. Bansaye and J. Berestycki. Large deviations for {B}ranching {P}rocesses in {R}andom {E}nvironment.  
\newblock {\em Markov Process. Related Fields.} {\bf 15} (2009) 493--524.
\bibitem{BB10}
V. Bansaye and C. B\"oinghoff. Upper large deviations for {B}ranching {P}rocesses in {R}andom {E}nvironment with heavy tails.
\newblock {\em Electron. J. Probab.} \textbf{16} (2011) 1900--1933.
\bibitem{BB11}
V. Bansaye and C. B\"oinghoff. Small positive values for supercritical {B}ranching {P}rocesses in {R}andom {E}nvironment.
Accepted for publication in Ann. Inst. Henri Poincar\'e Probab. Stat. {\em Avialable on arxiv via http://arxiv.org/abs/1112.5257} (2012).
\bibitem{BK09} 
C. B\"oinghoff and G. Kersting. Upper large deviations of branching processes in a random environment - Offspring distributions with geometrically bounded tails.
\newblock \textsl{Stochastic Process. Appl.} \textbf{120} (2010) 2064--2077.
\bibitem{dembo}
A. Dembo and O. Zeitoni. Large Deviations Techniques and Applications.
\newblock Jones and Barlett Publishers International. London (1993).
\bibitem{hollander}
F. den Hollander. 
\newblock Large {D}eviations.
\newblock American Mathematical Society.
\newblock Providence, RI (2000).
\bibitem{feller1}
W. Feller. An Introduction to Probability Theory and Its Applications- Volume I.
\newblock John Wiley \& Sons, Inc. New York (1968) 3. edition.
\bibitem{FVlowerLDGW} K. Fleischmann and V. Wachtel. Lower deviation probabilities for supercritical Galton-Watson processes.  \emph{Ann. Inst. Henri Poincar\'e Probab. Stat.}  {\bf 43}  (2007) 233--255.
\bibitem{FlWa}
K. Fleischmann and V. Wachtel. On the left tail asymptotics for the limit law of 
supercritical Galton-Watson processes in the B\"ottcher case.  \emph{Ann. Inst. Henri Poincar\'e Probab. Stat.}
{\bf 45}  (2009) 201--225. 
\bibitem{FlVa} K. Fleischmann and V. A. Vatutin. Reduced Subcritical Galton-Watson Processes in a Random Environment.
 \emph{Adv.   Appl. Probab.} {\bf 31} (1999) 88--111.
\bibitem{Hambly} B. Hambly. On the limiting distribution of a supercritical branching process in random environment. \emph{J. Appl. Probab. } {\bf 29} (1992) 499--518.

\bibitem{HuangLiu} C. Huang and Q. Liu.  
Moments, moderate and large deviations for a branching process in a random environment. \emph{Stochastic Process. Appl.} {\bf 122} (2010) 522--545.
\bibitem{HuangLiu2}  C. Huang and Q. Liu. Convergence in $L^p$ and its exponential rate for a branching process in a random environment. 
\emph{Avialable via http://arxiv.org/abs/1011.0533}  (2011).     
\bibitem{kallenberg}
O. Kallenberg. Foundations of Modern Probability. Springer.
\newblock London (2001), 2. edition.
\bibitem{kozlov76}
M. V. Kozlov. On the asymptotic behavior of the probability of non-extinction for critical branching processes in a random environment.
\newblock {\em Theory Probab. Appl.} {\bf 21} (1976) 791--804.
\bibitem{kozlov06} 
M. V. Kozlov. On large deviations of branching processes in a random environment: geometric distribution of descendants.
\newblock {\em Discrete Math. Appl.} {\bf 16} (2006) 155--174.
\bibitem{kozlov10} 
M. V. Kozlov. On large deviations of strictly subcritical branching processes in a random environment with geometric distribution of progeny.  
\emph{Theory Probab. Appl.} {\bf 54} (2010)  424--446.
\bibitem{Ney} P. E. Ney and A.  N. Vidyashankar. Local limit theory and large deviations for supercritical branching processes. \emph{Ann. Appl. Probab.} {\bf 14}  (2004) 1135--1166.
\bibitem{Rouault} A. Rouault. Large deviations and branching processes. \emph{Proceedings of the 9th International Summer School on Probability Theory and Mathematical Statistics} (Sozopol, 1997). Pliska Stud. Math. Bulgar. {\bf 13} (2000) 15--38.
\bibitem{smith69}
W. L. Smith and W.E. Wilkinson. On branching processes in random environments.
\newblock {\em Ann. Math. Stat.} {\bf 40} (1969) 814--824.
\end{thebibliography}
\end{document}